\newtheorem{theorem}{Theorem}[section]
\newtheorem{lemma}[theorem]{Lemma}
\newtheorem{proposition}[theorem]{Proposition}
\theoremstyle{definition}
\newtheorem{definition}{Definition}[section]
\theoremstyle{remark}
\newtheorem*{remark}{Remark}
\newcommand{\defeq}{\vcentcolon=}
\newcommand{\tb}[1]{\frac{\partial}{\partial \theta _{#1}}}
\newcommand{\sphere}{ \partial B}
\newcommand{\intsphere}{ \int_{\partial B}}
\newcommand{\minus}{\scalebox{0.75}[1.0]{$-$}}
\title{Quantitative Quermassintegral Inequalities for Nearly Spherical Sets}
\author{Caroline VanBlargan, Yi Wang }
\date{}
\theoremstyle{break}
\begin{document}

\maketitle

\begin{abstract}
   In this paper, we establish quantitative Alexandrov-Fenchel inequalities for quermassintegrals on nearly spherical sets. In particular, we bound the $(k,m)$-isoperimetric deficit from below by the Frankael asymmetry. We also find a lower bound on the $(k,m)$-isoperimetric deficit using the spherical deviation.
\end{abstract}
\section{Introduction}
In this paper, we study stability in the quermassintegral inequalities for nearly spherical sets, which is largely motivated by work done on the stability in the classical isoperimetric inequality. In particular, we are motivated by analysis done on the \textit{isoperimetric deficit} $\delta(\Omega)$ of a domain $\Omega \subseteq \mathbb{R}^{n+1}$, defined as
\begin{align}
\label{classical_iso_def}
    \delta(\Omega) := \frac{P(\Omega) - P(B_{\Omega})}{P(B_{\Omega})}.
\end{align}
Here $|B_{\Omega}|$ is the volume of $B_{\Omega}$, $B_{\Omega}$ is a ball such that $|\Omega| = |B_{\Omega}|$, and $P( \cdot )$ gives the perimeter of a set. The classical isoperimetric inequality is equivalent to $\delta(\Omega) \geq 0$, with equality if and only $\Omega$ is a ball. There has been a lot of work studying quantitative isoperimetric inequalities inspired by the \textit{Bonnesen type inequalities}, which was named by Osserman in \cite{MR519520}. This was based off work by Bonnesen, where he studied inequalities in the form
\begin{align}
    L^2- 4\pi A \geq \lambda(C).
\end{align}
In this setting, $L$ and $A$ represent the length and area enclosed by a simple closed curve $C$ in $\mathbb{R}^2$. Moreover, $\lambda(C)$ satisfies three conditions:
\begin{enumerate}
    \item  $\lambda(C) \geq 0$.
    \item  $\lambda(C) = 0$ precisely when $C$ is a circle.
    \item   $\lambda(C)$ measures geometrically how close $C$ is to a circle. 
\end{enumerate}
Fuglede worked to expand these results to higher dimensions in \cite{MR859955} and \cite{MR942426}, where they proved a quantitative isoperimetric inequality for nearly spherical sets. They used this result to study the stability for convex domains using the spherical deviation. 
\begin{definition}
\label{sphericaldeviation}
For a domain $\Omega \subseteq \mathbb{R}^{n+1}$, set
    $\tilde{\Omega}:= \frac{\text{Vol}(B)}{\text{Vol}(\Omega)}(\Omega - \text{bar}(\Omega))$,
where $\text{bar}(\Omega)$ is the barycenter of $\Omega$ and $B$ is the unit ball. 
The \textit{spherical deviation} of $\Omega$, $d(\Omega)$, is defined as
\begin{align}
   d(\Omega) := 
  d_H ( \tilde{\Omega}, B ),
\end{align}
where $d_H(\cdot, \cdot)$ gives the Hausdorff distance between two sets.
\end{definition}
\noindent
Specifically, for a convex domain $\Omega$, they established an inequality in the form $d(\Omega) \leq f(\delta(\Omega))$, for some function $f$.

To establish a quantitative isoperimetric inequality for more general domains, the \textit{Fraenkel asymmetry}, $\alpha(\Omega)$, is a well-studied quantity used as a lower bound. 
\begin{definition}
\label{frankael_def}
Suppose $\Omega \subseteq\mathbb{R}^{n+1}$. The \textit{Fraenkel asymmetry} of $\Omega$ is denoted by $\alpha(\Omega)$, where
\begin{align}
    \alpha(\Omega):= \inf \bigg \{\frac{|\Omega \Delta (x + B_{\Omega})|}{|B_{\Omega}|}, x \in \mathbb{R}^{n+1} \bigg\}.
\end{align}
$B_{\Omega}$ denotes the ball centered at the origin with the same volume as $\Omega$, and $\Delta$ denotes the symmetric difference between two sets.
\end{definition}
Using the Fraenkal asymmetry in the study of stability brings us to the \textit{quantitative isoperimetric inequality}, which asks if there is a fixed $C(n)>0$  such that all Borel sets $\Omega\subseteq \mathbb{R}^{n+1}$ with finite measure satisfy the inequality
\begin{align}
    \delta(\Omega) \geq C(n) \alpha^{m}(\Omega), \label{stability}
\end{align}
for some exponent $m$. The quantitative isoperimetric inequality $ \delta(\Omega) \geq C \alpha^{2}(\Omega)$ was shown for Steiner symmetrical sets in \cite{MR1243100} by  Hall, Hayman, and Weitsman. Later, by using results on the Steiner symmetral, Hall showed
\eqref{stability} in \cite{MR1166511}, but with a suboptimal exponent of $m=4$.

 Finally, in \cite{MR2456887}, Fusco, Maggi, and Pratelli showed, by using symmetrizations of $\Omega$, that \eqref{stability} holds with optimal expontent $m=2$ for Borel sets $\Omega\subseteq \mathbb{R}^{n+1}$ of finite measure. Figalli, Maggi, and Pratelli in  \cite{MR2672283} proved this optimal result in the more general setting of the anisotropic perimeter.
 \begin{theorem}
 [\cite{MR2672283}, \cite{MR2456887}]
 Suppose $n\geq 1$. Then for any Borel set $\Omega\subseteq \mathbb{R}^{n+1}$ of finite measure,
 \begin{align}
     \delta(\Omega) \geq C(n) \alpha^{2}(\Omega), \label{fuscoresult}
 \end{align}
 where $C(n) >0$ depends only on $n$.
 \end{theorem}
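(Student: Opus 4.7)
The plan is to argue by contradiction via a \emph{selection principle} of Cicalese--Leonardi type, which reduces the general statement to the nearly spherical case where Fuglede's inequality applies. Suppose \eqref{fuscoresult} fails: there is a sequence of Borel sets $\Omega_k \subseteq \mathbb{R}^{n+1}$, normalized so that $|\Omega_k| = |B|$, with $\delta(\Omega_k)/\alpha(\Omega_k)^2 \to 0$. Since the ratio $\delta/\alpha^2$ is bounded away from $0$ on any family of sets uniformly far (in $L^1$) from all balls of volume $|B|$, one may further reduce to the regime $\alpha(\Omega_k) \to 0$, and after translation $\Omega_k \to B$ in $L^1$.

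The selection step replaces $\Omega_k$ by a more regular competitor $F_k$ of the same volume that solves a penalized variational problem of the form
\[
  \min \Bigl\{ P(F) + \Lambda_k \bigl| \alpha(F) - \alpha(\Omega_k) \bigr| \ : \ |F| = |B| \Bigr\}.
\]
A direct-method argument produces such a minimizer, and by calibrating $\Lambda_k$ carefully one shows that $F_k$ inherits the critical property $\delta(F_k)/\alpha(F_k)^2 \to 0$. Because the asymmetry penalty is a bounded, non-local perturbation of the perimeter, each $F_k$ is an $\omega$-minimizer of perimeter in the sense of Almgren, and the De~Giorgi--Almgren--Tamanini regularity theory yields $C^{1,\alpha}$ convergence $F_k \to B$. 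Hence for large $k$, $\partial F_k$ is the graph of a function $u_k$ over $\partial B$ with $\|u_k\|_{C^{1,\alpha}(\partial B)} \to 0$.

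With $F_k$ now nearly spherical, Fuglede's quantitative result yields
\[
  \delta(F_k) \ \geq \ c(n) \, \|u_k\|_{H^1(\partial B)}^2,
\]
while the elementary bound $\alpha(F_k) \leq C \|u_k\|_{L^1(\partial B)} \leq C' \|u_k\|_{H^1(\partial B)}$ combines with the previous inequality to give $\delta(F_k) \geq c'(n) \alpha(F_k)^2$, contradicting the construction. The main obstacle is the selection step itself: existence of $F_k$ is routine, but one must tune $\Lambda_k$ so that the penalized problem simultaneously preserves the bad ratio $\delta/\alpha^2$ and produces an $\omega$-minimizer to which the regularity theory applies. The non-local, non-smooth dependence of $\alpha$ on $F$ is the delicate technical point; the original Fusco--Maggi--Pratelli proof avoids this altogether via an intricate iterated Steiner symmetrization together with a continuity argument, which is the natural backup strategy if the selection approach runs into trouble.
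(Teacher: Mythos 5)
The paper does not supply a proof of this theorem; it is quoted as a background result from Fusco--Maggi--Pratelli \cite{MR2456887} and Figalli--Maggi--Pratelli \cite{MR2672283}, and the surrounding discussion merely summarizes the three known approaches (iterated Steiner symmetrization, optimal transport via the Brenier map, and the Cicalese--Leonardi Selection Principle). Your sketch follows the third of these — the Selection Principle — which is a genuinely different route from the two cited sources: \cite{MR2456887} reduces to $n$-symmetric sets and iterates Steiner symmetrization, while \cite{MR2672283} applies the Brenier map and works in the anisotropic setting. The Selection Principle is, however, the method closest in spirit to this paper's own nearly-spherical machinery, since it transfers the stability question to the perturbative regime where a Fuglede-type inequality such as \eqref{cicalese_stable} is the engine of the contradiction. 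What the selection route buys is conceptual economy and reusability (Fusco--Julin exploit the same reduction for the stronger oscillation estimate \cite{MR3216839}); what it costs is dependence on the full $\omega$-minimizer regularity theory of De~Giorgi--Almgren--Tamanini to upgrade $L^1$-convergence of the penalized minimizers to $C^{1,\alpha}$ graphicality over $\partial B$, whereas the symmetrization and mass-transport proofs avoid regularity theory at the price of more intricate geometric estimates. Your outline captures the essential steps correctly and honestly flags the one genuinely delicate point — calibrating $\Lambda_k$ so that the penalized minimizers simultaneously inherit the critical ratio $\delta(F_k)/\alpha(F_k)^2\to 0$, retain volume $|B|$, and qualify as $\omega$-minimizers — so as a high-level proof scheme this is sound, though a complete proof would need to spell out that calibration together with the compactness argument showing $\delta(\Omega_k)\to 0$ forces $\alpha(\Omega_k)\to 0$ after translation.
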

\noindent This theorem was proved in \cite{MR2456887} by reducing the problem to $n$-symmetric sets and then using the method of Steiner symmetry. To prove the result in \cite{MR2672283}, the authors did not use symmetrization arguments as done previously. Instead, they applied the Brenier map to employ methods in mass transportation theory. For further reading on the quantitative isoperimetric inequality see \cite{MR3404715} and \cite{MR2402947}.

In this paper, we will be working with quantities motivated by the theory of mixed volumes in convex geometry. For a convex body $\Omega \subset \mathbb{R}^{n+1}$, the \textit{k}-th \textit{quermassintegral} of $\Omega$ is the mixed volume
\begin{align}
    W_k(\Omega) := V(\Omega,...,\Omega,B,...,B),
\end{align}
where $\Omega$ appears in the first $n+1-k$ entries and $B$, which is the unit ball in $\mathbb{R}^{n+1}$, appears in the last $k$ entries. The famous Steiner formula states that the volume of $\Omega + tB$ is a polynomial in $t$. In particular,
\begin{align}
    \text{Vol}(\Omega + tB) = \sum_{k = 0}^{n+1} {n +1 \choose k}W_k(\Omega)t^k.
    \end{align}
Next, denoting $\omega_m$ as the volume of the unit $m$-ball, we set
\begin{align}
    V_k(\Omega) &:= \frac{\omega_k}{\omega_{n+1}}W_{n+1-k}(\Omega).
\end{align}
 Note that $V_{n+1}(A) = \text{Vol}(\Omega)$ and $V_n(\Omega)= \frac{\omega_{n+1}}{(n+1)\omega_n}\text{Area}(\partial \Omega)$. We obtain, as a consequence of the Alexandrov-Fenchel inequalities, the \textit{quermassintegral inequalities}
\begin{align}
   \bigg( \frac{V_{k+1}(\Omega)}{V_{k+1}(B)}\bigg)^{\frac{1}{k+1}} \leq \bigg( \frac{V_{k}(\Omega)}{V_{k}(B)}\bigg)^{\frac{1}{k}}.\label{alek}
\end{align}
When $k=n$, the inequality in \eqref{alek} is simply the classical isoperimetric inequality. For convex domains and $k\geq 1$, quermassintegrals have the useful integral formula
\begin{align}
\label{mixedvolumeidentity}
    V_{n+1 -k} = \frac{(n+1-k)!(k-1)!}{(n+1)!}\frac{\omega_{n+1-k}}{\omega_{n+1}}\int_{M}  \sigma_{k-1}(L) d\mu,
\end{align}
where  $M:= \partial \Omega$, $L$ is the second fundamental form of $M$, and $\sigma_k(L)$ is the \textit{$k$-th mean curvature} of $M$. The $k$-th mean curvature is the $k$-th elementary symmetric polynomial of the principal curvatures. 
The inequalities in \eqref{alek} equivalently state that for convex domains
\begin{align}
    \bigg (\int_M \sigma_{k-1}(L) d\mu\bigg )^{\frac{1}{n-k+1}}
    \leq
    C(n,k) \bigg (\int_M\sigma_{k}(L) d\mu \bigg )^{\frac{1}{n-k}},
    \label{aleksigma}
\end{align}
where $C(n,k)$ is the constant that gives equality in the case where $M$ is a sphere.

 Much of the previous work to establish \eqref{aleksigma} relies heavily on working with convex domains. There has been work extending \eqref{aleksigma} to a class of nonconvex domains known as \textit{k-convex domains}, where $\sigma_j(L) \geq 0$ for $1 \leq j \leq k$. Guan and Li showed in \cite{MR2522433}
that \eqref{aleksigma} holds in $k$-convex, starshaped domains. To prove the inequalities, they used a normalization of the flow $ X_t = \frac{\sigma_k(L)}{\sigma_{k-1}(L)} \nu$, which was studied by Gerhardt in \cite{MR1064876} and Urbas in \cite{MR1082861}.

In \cite{MR3107515}, Chang and Wang  were able to show \eqref{aleksigma} without the requirement of a starshaped domain, but with the added assumption of having $(k+1)$-convexity instead of just $k$-convexity, and the constant $C(n,k)$ is non-optimal. They proved this using optimal transport methods. See also \cite{MR3291634}, \cite{MR2831436}, and \cite{MR3247383}.

Our study of stability in the quermassintegral inequalities is inspired by work done by Fuglede in \cite{MR942426} and by Cicalese and Leonardi in \cite{MR2980529}.
 \begin{definition}
 \label{nearlysphere}
Suppose $M = \{ (1+u(x))x: x \in \partial B  \}$, where $\partial B$ is the unit sphere in $\mathbb{R}^{n+1}$ and $u:\partial B \rightarrow (-1, \infty)$ is a smooth function on the unit sphere. $M$ is referred to as a \textit{nearly spherical set} when we have suitable, small bounds on $|u|,$ $|\nabla u|$, and $|D^2u|$.
\end{definition}
\begin{remark}
Nearly spherical sets in \cite{MR859955} and \cite{MR2980529} only require bounds on $|u|$ and $|\nabla u|$. However, since we will be working with curvature terms, we will require small bounds on $|D^2u|$ as well. 
\end{remark}
In \cite{MR2980529}, Cicalese and Leonardi introduced a new method to show \eqref{fuscoresult} for all Borel sets of finite measure. In this paper, they utilized the results in  \cite{MR859955}, which they reformulated by assuming $||u||_{W^{1,\infty}} < \epsilon$ and found
\begin{align}
     \delta(\Omega) \geq  \frac{1 - O(\epsilon)}{2}||u||_{L^2}^2 + \frac{1}{4}|| \nabla u||^2_{L^2}, \label{cicalese_stable}
\end{align}
where $u$ is the function from Definition \ref{nearlysphere}. Note that functions in $O(\epsilon)$ may obtain either positive or negative values. 
It quickly follows that for nearly spherical sets, 
 \begin{align}
 \label{sphericaliso}
     \delta(\Omega) \geq C(1 + O(\epsilon))\alpha^2(\Omega).
 \end{align}
They proved the \textit{Selection Principle}, which provided a new proof of the quantitative isoperimetric inequality by reducing the problem to nearly spherical sets converging to the unit ball. 

To get \eqref{sphericaliso} from \eqref{cicalese_stable}, only the weaker statement
$ \delta(\Omega) \geq  \frac{1 - O(\epsilon)}{2}||u||_{L^2}^2$ is needed. However, in \cite{MR3216839}, Fusco and Julin showed a stronger result for stability in the isoperimetric problem, where they bound the \textit{asymmetry index} $A(\Omega)$, so that $\delta(\Omega) \geq C A^2(\Omega) $. To do this, they need the full result that $\delta(\Omega) \geq C||u||_{W^{1,2}}^2 $ for nearly spherical sets. Then, as in \cite{MR2980529}, they are able to use the methods of the Selection Principle to reduce the general problem to the results for nearly spherical sets. 

In this paper we aim to prove a version of \eqref{cicalese_stable} as it applies to the quermassintegral inequalities. First, we define $I_k(\Omega)$ by integrating the $k$-th mean curvature of $M$ for $k \geq 0$,  that is
\begin{align}
     I_k(\Omega) := \int_{M} \sigma_k(L)  \textit{ dA} . 
\end{align}
Also, for $k=-1$ we define
\begin{align}
    I_{\minus 1}(\Omega) := \text{Vol}(\Omega). 
\end{align}
We are now able to define a natural generalization of the isoperimetric deficit for quermassintegrals.
\begin{definition}
\label{km_deficit}
For $\minus 1<k \leq n$ and $\minus 1 \leq m < k$, the \textit{(k,m)-isoperimetric deficit} is denoted by $\delta_{k,m}(\Omega)$, where
\begin{align}
    \delta_{k,m}(\Omega) \defeq \frac{I_k(\Omega) - I_k(B_{\Omega,m})}{I_k(B_{\Omega,m})}. 
    \end{align}
Here $B_{\Omega,m}$ is the ball centered at the origin where $I_m(B_{\Omega,m})= I_m(\Omega)$. 
\end{definition}

In Section 3, we compute an explicit formula for the $k$-th mean curvature of nearly spherical sets. Then in Section 4, we add in the assumption that $||u||_{W^{2,\infty}} < \epsilon$ to expand out $I_k(\Omega)$. This brings us to our first main theorem. 
\begin{theorem} 
\label{firstthm}
\textit{ Suppose  $\Omega= \{ (1+u(\frac{x}{|x|}))x: x \in B  \} \subseteq \mathbb{R}^{n+1}$, where $u \in C^3(\partial B)$, $\text{Vol}(\Omega) = \text{Vol}(B)$, and $\text{bar}(\Omega) =0$.}
 For all $\eta >0$, there exists $\epsilon >0$ such that if $||u||_{W^{2,\infty}}< \epsilon$, then
\begin{align}
    \delta_{k,j}(\Omega)
    \geq 
\bigg (
\frac{(n-k)(k+1)}{2n(n+1)^2}  -\eta \bigg)\alpha^2(\Omega).
\end{align}
\end{theorem}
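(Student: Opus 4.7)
The plan is to follow the Fuglede / Cicalese--Leonardi blueprint, specialized to the curvature functional $I_k$. Sections~3 and 4 of the paper supply an explicit second-order expansion
\[
I_k(\Omega) = I_k(B) + L_k(u) + Q_k(u) + R_k(u),
\]
where $L_k(u)$ is a constant multiple of $\int_{\partial B} u\, d\sigma$, $Q_k(u)$ is an explicit quadratic form in $u$ and $\nabla u$ on $\partial B$, and $R_k(u) = O(\epsilon)\bigl(\|u\|_{L^2}^2 + \|\nabla u\|_{L^2}^2\bigr)$ under the hypothesis $\|u\|_{W^{2,\infty}} < \epsilon$. The same expansion holds with $k$ replaced by $j$. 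Since $B_{\Omega,j}$ is a round ball of radius $r$ determined by $I_j(B_{\Omega,j}) = I_j(\Omega)$, the scaling identity $I_k(B_r) = r^{n-k} I_k(B)$ combined with a second-order inversion of $I_j(B_r) = I_j(\Omega)$ for $r$ yields an expansion of $I_k(B_{\Omega,j})$ in terms of $L_j(u)$ and $Q_j(u)$. Subtracting and dividing, I obtain an explicit expansion of $\delta_{k,j}(\Omega)$ to order $\|u\|_{H^1}^2$.

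Next, I would decompose $u$ in spherical harmonics on $\partial B$, writing $u = \sum_{\ell \geq 0} u_\ell$ with $-\Delta_{S^n} u_\ell = \lambda_\ell u_\ell$ and $\lambda_\ell = \ell(\ell + n - 1)$. The assumption $\text{Vol}(\Omega) = \text{Vol}(B)$ forces the mean $u_0$ to be quadratically small, and $\text{bar}(\Omega) = 0$ forces the $\ell = 1$ modes to be quadratically small. Substituting these back into the deficit expansion cancels the linear contributions and reduces the leading term to a quadratic form acting on the modes $\ell \geq 2$ only. Since both $u$ and $\nabla u$ are simultaneously diagonalized by the spherical harmonics, the coefficient of $\|u_\ell\|_{L^2}^2$ in this form is a rational function of $\lambda_\ell, n, k, j$; its minimum over $\ell \geq 2$ should be attained at $\ell = 2$ (where $\lambda_2 = 2(n+1)$), producing the constant $\frac{(n-k)(k+1)}{2n(n+1)^2}$.

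Finally, I would invoke the standard observation (going back to \cite{MR859955}) that for a nearly spherical set with zero barycenter and volume $\text{Vol}(B)$, one has $\|u\|_{L^2(\partial B)}^2 \geq c_n\, \alpha(\Omega)^2$ for a positive constant $c_n$; this follows from bounding $|\Omega \Delta B|$ by $\|u\|_{L^1}$ via $(1+u)^{n+1} - 1 = (n+1)u + O(u^2)$ and then applying Cauchy--Schwarz on $\partial B$. Combining this with the quadratic lower bound from the previous step and absorbing the $O(\epsilon)$ errors into the prescribed $\eta$ gives the claimed inequality. The main obstacle will be the bookkeeping in the second step: tracking how the $Q_j$ contribution entering through $r$ in the denominator interacts with $Q_k$ in the numerator, verifying the cancellation of the $\ell = 0, 1$ modes via the constraints, and confirming that the resulting quadratic form on higher harmonics is indeed minimized at $\ell = 2$. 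A related subtlety is that $Q_k$ generally mixes $u^2$ and $|\nabla u|^2$ with coefficients of potentially opposite sign, so positivity only becomes manifest after the spectral identity $\int_{\partial B} |\nabla u_\ell|^2\, d\sigma = \lambda_\ell \int_{\partial B} u_\ell^2\, d\sigma$ is invoked on each harmonic component.
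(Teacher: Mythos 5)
Your proposal is correct and matches the paper's route: expand $I_k(\Omega) - I_k(B)$ via the second-order expansion of Lemma~\ref{expansion}, kill the $\ell = 0$ and $\ell = 1$ spherical harmonics using the volume and barycenter constraints (which the paper packages as the Poincar\'e-type inequality $||\nabla u||_{L^2}^2 \geq 2(n+1)||u||_{L^2}^2 + O(\epsilon)||u||_{L^2}^2$), and close by bounding $\alpha^2(\Omega)$ above by $||u||_{L^2}^2$ via Cauchy--Schwarz. Two small refinements worth noting: since the hypothesis here is $\text{Vol}(\Omega) = \text{Vol}(B)$, we have $j = -1$ and $B_{\Omega,-1} = B$ exactly, so the scaling-inversion step for $I_k(B_{\Omega,j})$ that you flag as the main obstacle is vacuous (and even in Theorem~\ref{secondthm} the normalization $I_j(\Omega) = I_j(B)$ makes $B_{\Omega,j} = B$); also, the factor $(n+1)^2$ in the stated constant comes from the Cauchy--Schwarz step $||u||_{L^1} \leq \text{Area}(\partial B)^{1/2}||u||_{L^2}$ rather than from $\lambda_2 = 2(n+1)$ --- pushing your mode-by-mode bound to its conclusion would in fact yield the slightly sharper constant $\frac{(n-k)(k+1)(n+2)}{2n(n+1)^2}$, whereas the paper deliberately retains half of $||\nabla u||_{L^2}^2$ in Proposition~\ref{propvol} so as to preserve full $W^{1,2}$ control, used later in Section~\ref{haussdorffsection}.
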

In Section 5, we prove a similar theorem, but we assume that $I_j(\Omega) = I_j(B)$ instead of $\text{Vol}(\Omega) = \text{Vol}(B)$.
\begin{theorem} 
\label{secondthm}
Fix $0 \leq j < k$. Suppose $\Omega= \{ (1+u(\frac{x}{|x|}))x: x \in B  \}\subseteq \mathbb{R}^{n+1}$, where $u \in C^3(\partial B)$,  $I_j(\Omega) = I_j(B)$, and $\text{bar}(\Omega) =0$.
For all $\eta>0$, there exists $\epsilon>0$ such that if  $||u||_{W^{2,\infty}}< \epsilon$, then
\begin{align}
    \delta_{k,j}(\Omega)
    \geq 
\bigg (
\frac{n(n-k)(k-j)}{4(n+1)^2}  - \eta \bigg)\alpha^2(\Omega).
\end{align}

\end{theorem}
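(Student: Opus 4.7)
The plan is to adapt the second-order expansion of $I_k(\Omega)$ established in Section~4 (used there to prove Theorem~\ref{firstthm}) to the setting where the volume constraint is replaced by the $I_j$-constraint. Because $I_j(\Omega) = I_j(B)$, Definition~\ref{km_deficit} gives $B_{\Omega,j} = B$, so
\[
\delta_{k,j}(\Omega) = \frac{I_k(\Omega) - I_k(B)}{I_k(B)}.
\]

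Under the hypothesis $\|u\|_{W^{2,\infty}} < \epsilon$, the formula for $\sigma_k(L)$ on nearly spherical sets derived in Section~3, together with the expansion of the induced area element on $M$, yields an expansion
\[
I_k(\Omega) = I_k(B) + \alpha_k \int_{\partial B} u \, dS + \beta_k \int_{\partial B} u^2 \, dS + \gamma_k \int_{\partial B} |\nabla u|^2 \, dS + O(\epsilon \|u\|_{W^{1,2}}^2),
\]
with $\alpha_k,\beta_k,\gamma_k$ depending only on $n,k$ (after integration by parts on $\partial B$ absorbs any second-derivative cross terms). The same expansion holds with $j$ in place of $k$. The constraint $I_j(\Omega) = I_j(B)$ then allows me to solve for $\int u \, dS$ in terms of $\int u^2$ and $\int |\nabla u|^2$ up to $O(\epsilon \|u\|^2)$. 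Substituting back into the expansion for $I_k(\Omega)$ collapses $\delta_{k,j}(\Omega)$ into a pure quadratic form in $u$:
\[
I_k(B)\,\delta_{k,j}(\Omega) = \left(\beta_k - \frac{\alpha_k}{\alpha_j}\beta_j\right)\|u\|_{L^2}^2 + \left(\gamma_k - \frac{\alpha_k}{\alpha_j}\gamma_j\right)\|\nabla u\|_{L^2}^2 + O(\epsilon \|u\|^2).
\]

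Next I would decompose $u = \sum_{\ell \geq 0} u_\ell$ into spherical harmonics on $\partial B$. The barycenter condition $\text{bar}(\Omega)=0$ forces $\|u_1\|_{L^2} = O(\|u\|_{L^2}^2)$, while the $I_j$-constraint forces $|\bar u| = O(\|u\|_{L^2}^2)$, so the $\ell = 0$ and $\ell = 1$ contributions to the quadratic form are of size $O(\|u\|_{L^2}^4) = O(\epsilon \|u\|_{L^2}^2)$ and are absorbed into the error. On the remaining modes $\ell \geq 2$ the sharp Poincaré inequality $\int |\nabla u_\ell|^2 \geq 2(n+1)\int u_\ell^2$ is available. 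Inserting the explicit values of $\alpha_k,\beta_k,\gamma_k$ (computed by expanding $\sigma_k(L)$ via the Newton identities applied to the eigenvalues of the perturbed second fundamental form), the combined coefficient should simplify to give
\[
\delta_{k,j}(\Omega) \geq \frac{n(n-k)(k-j)}{4(n+1)^2}\cdot\frac{\|u\|_{L^2}^2}{|\partial B|} - O(\epsilon)\,\|u\|_{L^2}^2.
\]
The argument closes by invoking Fuglede's nearly-spherical $L^2$ asymmetry estimate $\|u\|_{L^2}^2 \geq (1 - O(\epsilon))\,|\partial B|\,\alpha(\Omega)^2$, which is valid here since the $I_j$-constraint and the barycenter condition together give $\text{Vol}(\Omega) = \text{Vol}(B) + O(\|u\|_{L^2}^2)$.

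The main obstacle is computational: tracking $\alpha_k,\beta_k,\gamma_k$ for both $k$ and $j$, forming the combinations $\beta_k - (\alpha_k/\alpha_j)\beta_j$ and $\gamma_k - (\alpha_k/\alpha_j)\gamma_j$, and verifying that after the $\ell \geq 2$ Poincaré step the coefficient collapses to precisely $n(n-k)(k-j)/[4(n+1)^2]$. A secondary subtlety is rigorously absorbing the $\ell = 0,1$ contributions into the $O(\epsilon\|u\|^2)$ error; this relies on the quadratic smallness $|\bar u|,\,\|u_1\|_{L^2} = O(\|u\|_{L^2}^2)$ extracted from the $I_j$ and barycenter constraints, together with $\|u\|_{L^\infty} < \epsilon$ which lets $\|u\|_{L^2}^4$ be controlled by $\epsilon^2 \|u\|_{L^2}^2$.
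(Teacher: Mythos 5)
Your overall skeleton — expand $I_k$ and $I_j$ to second order, use $I_j(\Omega)=I_j(B)$ to eliminate $\int u$, split into spherical harmonics, and control the $\ell=0,1$ modes by the constraints — matches the paper's strategy (Lemma \ref{expansion}, Proposition \ref{propgen}, Lemma \ref{spherharmbound}). But there is a genuine gap in the closing step. You try to finish via an $L^2$ asymmetry estimate ``$\|u\|_{L^2}^2 \geq (1-O(\epsilon))\,|\partial B|\,\alpha(\Omega)^2$,'' but this is wrong by a factor of $(n+1)^2$: the correct relation (taking $x=0$ in the Fraenkel infimum and estimating $|\Omega\Delta B_\Omega|$ by $\|u-\bar u\|_{L^1}$ plus Cauchy--Schwarz) is $\alpha^2(\Omega) \leq \frac{(n+1)^2}{|\partial B|}\|u\|_{L^2}^2\,(1+O(\epsilon))$. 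More seriously, even with the correct constant, routing the argument through $\|u\|_{L^2}^2$ alone cannot give the stated coefficient for $n\geq 4$. Proposition \ref{propgen} gives
\[
\delta_{k,j}(\Omega) \geq \frac{(n-k)(k-j)}{2n\,|\partial B|}\int_{\partial B}\bigl(|\nabla u|^2 - nu^2\bigr)\,dA + O(\epsilon)\|u\|_{W^{1,2}}^2,
\]
and even using the sharp Poincar\'e step $\int|\nabla u|^2 \geq 2(n+1)\int u^2$ to make this a pure $\|u\|_{L^2}^2$ bound yields coefficient $\frac{(n-k)(k-j)(n+2)}{2n(n+1)^2}$, which is strictly less than the claimed $\frac{n(n-k)(k-j)}{4(n+1)^2}$ whenever $2(n+2) < n^2$, i.e., for all $n\geq 4$. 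Your intermediate display ``$\delta_{k,j} \geq \frac{n(n-k)(k-j)}{4(n+1)^2}\cdot\frac{\|u\|_{L^2}^2}{|\partial B|}$'' is not derived (the word ``should'' signals this), and it does not follow from the expansion; it appears reverse-engineered to cancel against the incorrect $\alpha$--$\|u\|_{L^2}$ normalization.

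The ingredient you are missing is the paper's Lemma \ref{asymmetry_upperbound_novolumepreserve}: when the volume is not normalized to $\mathrm{Vol}(B)$ (only $I_j$ is), one must bound $\alpha(\Omega)$ by $\|\nabla u\|_{L^2}$ rather than $\|u\|_{L^2}$. This is done by writing $|\Omega\Delta B_\Omega|$ as an integral of $|(1+u)^{n+1}-r^{n+1}|$, reorganizing it as a sum of $|u^k - \mathrm{Avg}(u^k)|$, and applying the Poincar\'e inequality on $\partial B$ to each term. Combined with the $\|\nabla u\|_{L^2}^2$ part of the lower bound in Proposition \ref{propgen} (coefficient $\frac{(n-k)(k-j)}{4n\,|\partial B|}$), and the estimate $\|\nabla u\|_{L^2}^2 \geq \frac{n^2}{(n+1)^2}|\partial B|\,\alpha^2(\Omega)(1+O(\epsilon))$ from that lemma, this gives exactly $\frac{n(n-k)(k-j)}{4(n+1)^2}$. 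The gradient-based asymmetry bound is not an optional refinement here; it is what produces the stated constant and what works uniformly in $n$.
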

 We remark that for sufficiently small $||u||_{W^{2,\infty}}$, $\Omega$ is a convex domain. Then, we already know from the result of Guan and Li, which assumes $\Omega$ is k-convex and starshaped, that $\delta_{k,j}(\Omega) \geq 0$. So, we are establishing a quantitative isoperimetric inequality in this case.

In Section \ref{haussdorffsection}, we prove the following theorem.

\begin{theorem}
\label{infboundintro}
Suppose  $\Omega = \{ (1+u(\frac{x}{|x|}))x: x \in B  \}\subseteq \Omega$, where $u \in C^3(\partial B)$, $\text{Vol}(\Omega) = \text{Vol}(B)$, and   \text{bar}($\Omega$) = 0.
 There exists an $\eta >0$ so if  $||u||_{W^{2,\infty}} < \eta$, then
  \[ ||u||_{L^{\infty}}^n 
  \leq
  \begin{cases} 
          C \delta_{k,\minus 1}^{1/2}(\Omega) & n = 1 \\
        C \delta_{k,\minus 1}(\Omega)\log \frac{A}{\delta_{k,\minus 1}(\Omega)} & n=2\\
          C\delta_{k,\minus 1}(\Omega) & n \geq 3 ,
       \end{cases}
    \]
  where $A,C>0$ depend only on n.   
\end{theorem}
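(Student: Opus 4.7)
The plan is to deduce the Hausdorff-type estimate from a sharper, Sobolev-level form of Theorem \ref{firstthm}. Specifically, the proof of Theorem \ref{firstthm} proceeds (as in Fuglede's original treatment of the classical case) by expanding $I_k(\Omega)-I_k(B)$ as a quadratic form in $u$ plus cubic error terms in $\|u\|_{W^{2,\infty}}$; the volume constraint $\mathrm{Vol}(\Omega)=\mathrm{Vol}(B)$ and the barycenter constraint kill the projections of $u$ onto the first two spherical-harmonic eigenspaces, and the quadratic part is coercive on the orthogonal complement. I take as the starting point the $H^1$-type bound
\begin{equation}
\delta_{k,\minus 1}(\Omega)\;\geq\;c_0\bigl(\|u\|_{L^2(\partial B)}^2+\|\nabla u\|_{L^2(\partial B)}^2\bigr),
\end{equation}
valid for $\|u\|_{W^{2,\infty}}$ sufficiently small, with $c_0=c_0(n,k)>0$. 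The remaining task is purely analytic: upgrade an $H^1(\partial B)$ bound on $u$ to an $L^\infty$ bound, and the dimension of $\partial B$ dictates which Sobolev tool one uses.

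The workhorse is a near-maximum $L^p$ lower bound. Let $M_0=\|u\|_{L^\infty}$; after possibly replacing $u$ by $-u$, assume $u(x_0)=M_0$ for some $x_0\in\partial B$. The Lipschitz bound $|\nabla u|\leq\epsilon$ forces $u(x)\geq M_0/2$ on the geodesic ball of radius $M_0/(2\epsilon)$ about $x_0$, which is contained in $\partial B$ because $M_0\leq\epsilon$. Integrating,
\begin{equation}
\|u\|_{L^p(\partial B)}^p\;\geq\;c_n\,\frac{M_0^{\,p+n}}{\epsilon^{\,n}},\qquad 1\leq p<\infty,
\end{equation}
with $c_n$ a dimensional constant.

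Pairing this with the appropriate embedding resolves each case. For $n\geq 3$, the Sobolev embedding $H^1(\partial B)\hookrightarrow L^{2n/(n-2)}(\partial B)$ used with $p=2n/(n-2)$ gives $M_0^{n/2}/\epsilon^{(n-2)/2}\leq C\|u\|_{H^1}\leq C\delta_{k,\minus 1}^{1/2}(\Omega)$, whence $M_0^{\,n}\leq C\epsilon^{\,n-2}\delta_{k,\minus 1}(\Omega)\leq C\delta_{k,\minus 1}(\Omega)$. For $n=1$ the near-maximum step is unnecessary: the one-dimensional Morrey embedding $H^1(\partial B)\hookrightarrow L^\infty(\partial B)$ yields $M_0\leq C\|u\|_{H^1}\leq C\delta_{k,\minus 1}^{1/2}(\Omega)$ directly. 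For $n=2$, the critical embedding is only of Moser-Trudinger type, namely $\|u\|_{L^p(\partial B)}\leq C\sqrt{p}\,\|u\|_{H^1(\partial B)}$ for all $p\in[2,\infty)$; combined with the lower bound this gives $M_0^{(p+2)/p}\leq C\sqrt{p}\,\epsilon^{2/p}\delta_{k,\minus 1}^{1/2}(\Omega)$, and choosing $p$ of order $\log\!\bigl(A/\delta_{k,\minus 1}(\Omega)\bigr)$ produces exactly the logarithmic correction $M_0^2\leq C\delta_{k,\minus 1}(\Omega)\log\bigl(A/\delta_{k,\minus 1}(\Omega)\bigr)$.

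The genuine difficulty is the borderline case $n=2$: the embedding $H^1(\partial B)\hookrightarrow L^\infty(\partial B)$ fails in two dimensions, so no single Sobolev inequality closes the argument and one is forced into the $\sqrt{p}$ growth of Moser-Trudinger together with an optimization in $p$ that produces the log factor. Everything else is bookkeeping: establishing the $H^1$ reduction from the proof of Theorem \ref{firstthm}, verifying that the near-maximum ball has radius at most $\tfrac12$ so the local volume estimate on $\partial B$ is valid, and absorbing the $\epsilon^{\,n-2}$ factor in the $n\geq 3$ case into the final constant (it is in fact a small quantity and could be tracked to give an improved dependence on $\epsilon$).
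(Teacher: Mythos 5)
Your proof is correct, but it takes a genuinely different route from the paper's. The paper applies Fuglede's interpolation lemma (Lemma 1.4 of \cite{MR942426}, cited here as Lemma \ref{zeromean}) directly as a black box: it constructs the auxiliary function $w = \tfrac{1}{n+1}\bigl((1+u)^{n+1}-1\bigr)$, which has zero mean precisely because $\mathrm{Vol}(\Omega)=\mathrm{Vol}(B)$, applies Fuglede's bounds on $\|w\|_{L^\infty}^n$ in terms of $\|\nabla w\|_{L^2}$ and $\|\nabla w\|_{L^\infty}$, and then transfers these to $u$ via the near-identity comparability of $w$ and $u$. For $n=2$ the paper also needs the reverse bound $\delta_{k,\minus 1}(\Omega)\leq C\|\nabla u\|_{L^2}^2$ (read off from \eqref{Ikbeforespherical}) to control the logarithm. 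You instead re-derive the needed $L^2\to L^\infty$ upgrade from scratch: the $H^1$ coercivity from Proposition \ref{propvol}, a near-maximum localization exploiting $\|\nabla u\|_{L^\infty}<\epsilon$ (which requires $M_0/(2\epsilon)\leq\tfrac12$, guaranteed by $\|u\|_{L^\infty}<\epsilon$), and dimension-adapted embeddings — critical Sobolev for $n\geq 3$, Morrey for $n=1$, and Moser--Trudinger with an optimization over $p\sim\log\bigl(A/\delta_{k,\minus 1}(\Omega)\bigr)$ for $n=2$. Your route is more self-contained (no reliance on Fuglede's lemma as a black box, no need to introduce $w$ or its zero-mean property, and no need for the upper bound on $\delta_{k,\minus 1}$ in the $n=2$ case), at the cost of having to carry out the near-maximum estimate and $p$-optimization by hand; morally the two are the same kind of Gagliardo--Nirenberg interpolation between $\|\nabla u\|_{L^2}$ and $\|\nabla u\|_{L^\infty}$, with the paper packaging it into Fuglede's single lemma and you unpacking it dimension by dimension.
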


Theorem \ref{infboundintro} shows that the $(k,\minus 1)$-deficit gives a control on $||u||_{L^{\infty}}$, which is equivalent to the spherical deviation $d(\Omega)$. The proof of this theorem follows closely to Fuglede's in \cite{ MR942426}, where they proved this theorem when $k=0$ (for the classical isoperimetric deficit). In \cite{MR2994695}, Fusco, Gelli, and Pisante expanded this stability result for domains where they impose a uniform cone condition on the boundary. Studying the control on $||u||_{L^{\infty}}$ gives a stronger result than just the Fraenkel asymmetry, although there must be some regularity condition imposed for it to hold. In \cite{MR942426}, bounding $||u||_{L^{\infty}}$ by the classical isoperimetric deficit for nearly spherical domains was a key result to establish stability results for convex domains (without the the assumption that the domain is nearly spherical). We hope Theorem \ref{infboundintro} will be  useful for establishing a stability result with the spherical deviation for less restrictive $k$-convex domains.

\section{Preliminaries}
\subsection{The  $k$-th mean curvature} \label{sigmaproperty}
For $\lambda = (\lambda_1, ..., \lambda_n) \in \mathbb{R}^n$, we denote $\sigma_k(\lambda)$ as the $k$\textit{-th elementary symmetric polynomial} of $(\lambda_1, ..., \lambda_n)$. That is, for $1 \leq k \leq n$, 

\begin{align}
\sigma_k(\lambda) = \sum_{i_1<i_2<...<i_n} \lambda_{i_1} \lambda_{i_2} \cdot \cdot \cdot \lambda_{i_k},
\end{align}
and 
\begin{align}
\sigma_0(\lambda) = 1.
\end{align}

This leads to a natural generalization of the mean curvature of a surface. 

\begin{definition}{}
Suppose $\Omega$ is a smooth, bounded domain in $\mathbb{R}^{n+1}$. For $x \in M:=\partial \Omega$, the $k$\textit{-th mean curvature} of $M$ at $x$ is 
$\sigma_{k}(\lambda)$, where $\lambda = (\lambda_1(x), ..., \lambda_n(x))$ are the principal curvatures of $M$ at $x$. 
\end{definition}
Observe that in this definition, $\sigma_1(\lambda)$ is the mean curvature and $\sigma_n(\lambda)$ is the Gaussian curvature. 
When $(\lambda_1,..., \lambda_n)$ are the eigenvalues of a matrix $A = \{A_{j}^i \}$, we denote $\sigma_k(A) = \sigma_k(\lambda)$, which can be equivalently calculated as
\begin{align}
\sigma_k(A) = \frac{1}{k!} \delta_{i_1\cdot\cdot\cdot i_k}^{j_1\cdot\cdot\cdot j_k} A_{j_1}^{i_1}\cdot\cdot\cdot A_{j_k}^{i_k},
\end{align}
 using the Einstein convention to sum over repeated indices.
 
 So, if $L$ is the second fundamental form of $ M$, we can use this expression for $\sigma_k(L)$ to compute the $k$-th mean curvature of $ M$. 
Throughout this paper, we will be working with family of surfaces where, for $0<j\leq k$, $\sigma_j(L)\geq 0$ at each point. Such surfaces are called \textit{k-convex}.

\begin{definition}
Let $\Omega $ be a domain in $ \mathbb{R}^{n+1}$. Then the hypersurface $M:= \partial \Omega$ is said to be \textit{strictly k-convex} if the principal curvatures $\lambda =(\lambda_1,..., \lambda_n)$ lie in the \textit{G\r{a}rding cone} $\Gamma^{+}_k$, which is defined as
\begin{align}
    \Gamma^{+}_k := \{ \lambda \subseteq 
    \mathbb{R}^n: \sigma_j(\lambda) > 0, 1 \leq j \leq k\}.
\end{align}
\end{definition}
Note that n-convexity is the same as normal convexity. A useful operator related to $\sigma_k$ is the  \textit{Newton transformation tensor} $[T_k]^j_i$.
\begin{definition}
The Newton transformation tensor, $[T_k]^j_i$, of $n\times n$ matrices $\{ A_{1}, ..., A_k\}$ is defined as
\begin{align}
    [T_k]^j_i(A_1,...,A_k):= \frac{1}{k!} \delta_{ii_1\cdot\cdot\cdot i_k}^{jj_1\cdot\cdot\cdot j_k} (A_1)_{j_1}^{i_1}\cdot\cdot\cdot (A_k)_{j_k}^{i_k}.
\end{align}
When $A_1 = A_2 =...=A_k = A$, we denote $[T_k]^j_i(A) = [T_k]^j_i(A,... ,A)$.
\end{definition}
A related operator is  $\Sigma_k$, which  the polarization of $\sigma_k$.
\begin{definition}
Suppose $\{ A_{1}, ..., A_k\}$ is a collection of $n\times n$ matrices. We denote
    \begin{align}
    \Sigma_k(A_1,...,A_k)
    &:= 
    (A_1)_{j}^i[T_{k-1}]^j_i(A_2,...,A_k)
   \nonumber
   \\ 
   &= \frac{1}{(k-1)!} \delta_{i_1\cdot\cdot\cdot i_k}^{j_1\cdot\cdot\cdot j_k} (A_1)^{i_1}_{j_1}\cdot\cdot\cdot (A_k)^{i_k}_{j_k}.
\end{align}
\end{definition}
Two useful identities are
\begin{align}
    \sigma_k(A) = \frac{1}{k} \Sigma_k(A,...,A) = \frac{1}{k} A_{j}^i[T_{k-1}]^j_{i}(A) ,
\end{align}
and
\begin{align}
    \frac{\partial \sigma_k(A)}{\partial A_{j}^i} = \frac{1}{k}[T_{k-1}]^j_i(A).
\end{align}
We will also use the identity 
\begin{align}
    A_{s}^j[T_m]^i_j(A) = \delta^{i}_s\sigma_{m+1}(A) - [T_{m+1}]_s^i(A).
\end{align}

\subsection{The $(k,m)$-isoperimetric deficit}
\label{deficitprelim}

In this paper we look at a more general notion of the isoperimetric deficit as it pertains to the $k$-th mean curvature. We consider a bounded domain $\Omega \subset \mathbb{R}^{n+1}$ where $M:=\partial \Omega$ is a smooth hypersurface. First, we define $I_k(\Omega)$ by integrating the $k$-th mean curvature of $M$. That is,
\begin{align}
     I_k(\Omega) \defeq \int_{M} \sigma_k(L)  d\mu . 
\end{align}
The definition extends to $k=-1$ so that
\begin{align}
    I_{\minus 1}(\Omega) \defeq \text{Vol}(\Omega).
\end{align}
Furthermore, because $\sigma_0(L) = 1$, 
\begin{align}
    I_{0}(\Omega) =\text{Area}(M).
\end{align}As seen from the identity \eqref{mixedvolumeidentity}, $I_k(\Omega)$  is equal to a quermassintegral of $\Omega$  up to a constant.
We aim to study the $(k,m)$-deficit, $\delta_{k,m}(\Omega)$, from Definition \ref{km_deficit}. Note that $\delta_{0,-1}(\Omega)$ is the classical isoperimetric deficit from \eqref{classical_iso_def}.

For $k\geq 0$,
$
    I_k(B) = {n \choose k} Area (\partial B).
$
If we scale $\Omega$ by a fixed $r>0$, so  $rE = \{rx : x \in\Omega\}$, then 
$
    I_k(rE) = r^{n-k} I_k(\Omega).
$
It follows that
\begin{align}
    I_m(r\Omega) = I_m(rB_{\Omega,m}).
\end{align}
Therefore,  $ \delta_{k,m}(\Omega)$ is invariant under scaling. It is also invariant under translation.

Furthermore, if $r$ is the radius of $B_{\Omega,m}$, then $I_m(\Omega) = { n \choose m} r^{n-m}  Area (\partial B)$, giving
\begin{align}
    r = \bigg (\frac{I_m(\Omega)}{{n \choose m}Area (\partial B)} \bigg)^\frac{1}{n - m}.
\end{align}
Then,
\begin{align}
    I_k(B_{\Omega,m}) &= \bigg (\frac{I_m(\Omega)}{{n \choose m}Area (\partial B)} \bigg)^\frac{n-k}{n - m} {n \choose k}Area (\partial B)
   \nonumber \\
    &= \frac{ {n \choose k}}{ {n \choose m}^{\frac{n-k}{n - m}}}Area (\partial B)^{\frac{k-m}{n-m}}
    I_m(\Omega)^{\frac{n-k}{n-m}}.
\end{align}
In particular, when setting $m = k-1$,
\begin{align}
    \delta_{k,k-1}(\Omega) = \frac{I_k(\Omega)}{\frac{n-k+1}{k}I_{k-1}(B)^\frac{1}{n-k+1}I_{k-1}(\Omega)^\frac{n-k}{n-k+1}} - 1.
\end{align}
Thus, the inequality $\delta_{k,k-1}(\Omega) \geq 0$ is equivalent to the quermassintegral inequalities in \eqref{aleksigma}.

Our goal is to look at the quantitative isoperimetric inequality in the setting of the $k$-th mean curvature, which we refer to as the \textit{quantitative $(k,m)$-isoperimetric inequality}. That is, we aim to compare $\delta_{k,m}(\Omega)$ to the \textit{Fraenkel asymmetry} of $\Omega$, $\alpha(\Omega)$, which measures how close $\Omega$ is to a ball (see Definition \ref{frankael_def}).

 The Fraenkel asymmetry of a set is invariant under scalings and translations. Therefore, when studying the quantitative $(k,m)$-isoperimetric inequality, i.e. when there is a fixed $C>0$ so that
\begin{align}
    \delta_{k,m}(\Omega) \geq C \alpha^2(\Omega),
\end{align}
we only need to consider sets where $I_m(\Omega) = I_m(B)$ and $\text{bar}(\Omega)=0$.

\subsection{Nearly spherical sets}
\label{sphericalprelim}

   The focus of this paper is to establish the $(k,m)$-isoperimetric inequality for \textit{nearly spherical sets}. Our approach is inspired by Cicalese and Leonardi's work in the classical quantitative isoperimetric inequality for nearly spherical sets in \cite{MR2980529}.
That is, we consider a smooth, bounded domain $\Omega$ that is starshaped with respect to the origin, which is enclosed by $M := \partial \Omega$. We write $M = \{ (1+u(x))x: x \in \partial B  \}$, where $u:\partial B \rightarrow \mathbb{R}$ is a smooth function. The set $M$ is referred to as a \textit{nearly spherical set} when there is a suitable, small bound on $||u||_{W^{2,\infty}}$. In this section, we establish some useful formulas for nearly spherical sets.

We write $\mathbb{R}^{n+1}$ in spherical coordinates with the tangent basis $\{\frac{\partial}{\partial \theta_1},\frac{\partial}{\partial \theta_2}, ..., \frac{\partial}{\partial \theta_n}, \frac{\partial}{\partial r} \}$. Denoting $s_{ij}$ as the metric on the sphere, we have   $<\frac{\partial}{\partial \theta_i}, \frac{\partial}{\partial r} > = 0$, $<\frac{\partial}{\partial r}, \frac{\partial}{\partial r} > = 1$, and $<\frac{\partial}{\partial \theta_i}, \frac{\partial}{\partial \theta_j} > = r^2s_{ij}$.
Set $u_i = \frac{\partial u}{\partial \theta_i}$. Then, $\{e_i\}$ forms a tangent basis of $M$ where
\begin{align}
e_i = \frac{\partial}{\partial \theta_i} + u_i\frac{\partial}{\partial r}. 
\end{align}
We find,
\begin{align}
\label{outernormal_formula}
N = \frac{-\sum_{i=1}^n s^{ij}u_i \frac{\partial}{\partial \theta_j}+ (1+u)^2 \frac{\partial}{\partial r} }{(1+u)\sqrt{|\nabla u|^2 + (1+u)^2}} ,
\end{align}
where  $N$ is the the outward  unit normal on $M$, and the norm $|\nabla u|$ is taken with respect to the standard metric on $\partial B$.
We compute the metric $g_{ij}$ on $M$ as
\begin{align}
g_{ij} = <e_i,e_j> = (1+u)^2s_{ij} + u_iu_j , 
\end{align}
where $<\cdot, \cdot>$ is the standard Euclidean inner product on $\mathbb{R}^{n+1}$. Setting $g^{ij}$ to be the inverse of $g_{ij}$, we have

\begin{align}
g^{ij}  = \frac{s^{ij}}{(1+u)^2} - \frac{1}{(1+u)^2}\frac{u_ku_ls^{ki}s^{lj}}{|\nabla u|^2 + (1+u)^2}  .
\end{align}
We denote $h_{ij}$ as the second fundamental form on $M$. That is, 
$h_{ij}=-<N,\nabla_{e_i}e_j>$, and we form the shape operator $h^i_j$ by
\begin{align}
h^i_j = g^{ik}h_{kj}.
\end{align}
We now explicitly calculate $h^i_j$. First, note
\begin{align}
   \bigg (\nabla_{\frac{\partial}{\partial \theta_i}}\frac{\partial}{\partial \theta_j}
   \bigg)^k
    &=
    \frac{1}{2}\frac{1}{r^2}s^{kl}
    \bigg(
    \partial_i (r^2 s_{il}) + \partial_j(r^2s_{il}) - \partial_l(r^2s_{ij})
    \bigg)
    \\
    &=
    \frac{1}{2}s^{kl}
    \bigg(
    \partial_i s_{il} + \partial_js_{il} - \partial_ls_{ij}
    \bigg)
    \\
    &= \Gamma^k_{ij},
\end{align}
where $\Gamma^k_{ij}$ refers to the Christoffel symbol on $\mathbb{S}^n$, and \begin{align}
   \bigg (\nabla_{\frac{\partial}{\partial \theta_i}}\frac{\partial}{\partial \theta_j}
   \bigg)^r
    &=-rs_{ij}.
\end{align}
We thus obtain
\begin{itemize}
    \item $\nabla_{\frac{\partial}{\partial \theta_i}}\frac{\partial}{\partial \theta_j} = \Gamma^k_{ij}\frac{\partial}{\partial \theta_k}
    -rs_{ij}\frac{\partial}{\partial r}$.\end{itemize}
Similarly,    
    \begin{itemize}
    \item$ \nabla_{\frac{\partial}{\partial \theta_i}}\frac{\partial}{\partial r} 
    =
    \nabla_{\frac{\partial}{\partial r}}\frac{\partial}{\partial \theta_i}
    =
    \frac{1}{r}\frac{\partial}{\partial \theta_i}$,
       \item$ \nabla_{\frac{\partial}{\partial r}}\frac{\partial}{\partial r} = 0$.
\end{itemize}
Then,
\begin{align}
    \nabla_{e_i} e_j 
      &= \nabla_{\frac{\partial}{\partial \theta_i} + u_i\frac{\partial}{\partial r} }
      \bigg(
      \frac{\partial}{\partial \theta_j} + u_j\frac{\partial}{\partial r}
      \bigg)
      \nonumber 
      \\
      &=  \nabla_{\frac{\partial}{\partial \theta_i} }\frac{\partial}{\partial \theta_j} 
      +  \nabla_{\frac{\partial}{\partial \theta_i} }
      \bigg( u_j\frac{\partial}{\partial r} \bigg)
      + u_i\nabla_{ \frac{\partial}{\partial r} }\frac{\partial}{\partial \theta_j}
       + u_i\nabla_{ \frac{\partial}{\partial r} } 
       \bigg(
       u_j\frac{\partial}{\partial r}
       \bigg)
       \nonumber 
       \\
       &=  \nabla_{\frac{\partial}{\partial \theta_i} }\frac{\partial}{\partial \theta_j} 
      + u_j \nabla_{\frac{\partial}{\partial \theta_i} } \frac{\partial}{\partial r}
      +
      \bigg(\frac{\partial^2}{\partial \theta_i\partial \theta_j}
      u
      \bigg)\frac{\partial}{\partial r} 
      + u_i\nabla_{ \frac{\partial}{\partial r} }\frac{\partial}{\partial \theta_j}
       + u_iu_j\nabla_{ \frac{\partial}{\partial r} }\frac{\partial}{\partial r}
       + u_i
       \bigg(\frac{\partial}{\partial r}u_{i}
       \bigg) \frac{\partial}{\partial r}.
        \\
       &=  \Gamma^k_{ij}\frac{\partial}{\partial \theta_k}
    -(1+u)s_{ij}\frac{\partial}{\partial r}
      +\frac{1}{(1+u)}(u_j\tb{i} + u_i\tb{j})
      +
      \bigg(\frac{\partial^2}{\partial \theta_i\partial \theta_j}
      u
      \bigg)\frac{\partial}{\partial r}
\end{align}
So, our expression for $ h_{ij} $ becomes:
\begin{align}
 & -
 \bigg<
 \frac{-s^{pq}u_{p} \frac{\partial}{\partial \theta_q}+ (1+u)^2 \frac{\partial}{\partial r} }{(1+u)\sqrt{|\nabla u|^2 + (1+u)^2}} ,
 \Gamma^k_{ij}\frac{\partial}{\partial \theta_k}
    -(1+u)s_{ij}\frac{\partial}{\partial r}
      +\frac{1}{(1+u)}
      \bigg(u_j\tb{i} + u_i\tb{j}
      \bigg)
      +
      \bigg(\frac{\partial^2}{\partial \theta_i\partial \theta_j}
      u
      \bigg)\frac{\partial}{\partial r}
    \bigg >.
  \end{align}
Thus,
  \begin{align}
h_{ij} &=\frac{1}{\sqrt{|\nabla u|^2 +(1+u)^2}}
 \bigg(
 (1+u)u_k\Gamma^k_{ij}
 + (1+u)^2s_{ij} 
 +
 2u_iu_j
      -  (1+u) 
      \bigg(
 \frac{\partial^2}{\partial \theta_i\partial \theta_j} u
      \bigg)
     \bigg ) 
       \\
         \nonumber
 &=\frac{1}{\sqrt{|\nabla u|^2 +(1+u)^2}} (2u_iu_j
     + (1+u)^2s_{ij} 
      -  (1+u) u_{ij}
    \bigg  ),
\end{align}
where $u_{ij}$ denotes the Hessian of $u$ on $\mathbb{S}^n$.
Set
\begin{align}
D := \sqrt{|\nabla u|^2 +(1+u)^2}.
\end{align}
Then,
\begin{align}
    h^i_{j} 
    &= g^{ik}h_{kj}
     \nonumber
    \\
   &= 
   \bigg(\frac{s^{ik}}{(1+u)^2} - \frac{1}{(1+u)^2}\frac{u_mu_ls^{mi}s^{lk}}{D^2}\bigg)
   \frac{1}{D}
   \bigg(
   2u_ku_j
     + (1+u)^2s_{kj} 
      -  (1+u) u_{kj}
   \bigg)
    \nonumber
    \\
    &=
    \frac{2u^iu_j}{(1+u)^2D} + \frac{\delta^i_j}{D} - \frac{u^i_{j}}{(1+u)D}
    -\frac{2u^iu_j|\nabla u|^2}{(1+u)^2D^3} -\frac{u^iu_j}{D^3}
    +\frac{u^iu_lu^l_{j}}{(1+u)D^3}.
\end{align}
We observe that
\begin{align}
     \frac{2u^iu_j}{(1+u)^2D}   -\frac{2u^iu_j|\nabla u|^2}{(1+u)^2D^3}
     - \frac{u^iu_j}{D^3}
&= \frac{u^iu_j}{D^3},
\end{align}
which yields
\begin{align}
    h^i_{j} 
    &=
   \frac{\delta^i_j}{D} - \frac{u^i_{j}}{(1+u)D}
    +\frac{u^iu_lu^l_{j}}{(1+u)D^3}
    +\frac{u^iu_j}{D^3}.
\end{align}
Next, note
\begin{align}
&\sqrt{ \textit{det } g_{ij}  }= (1+u)^n \sqrt{\frac{|\nabla u|^2}{(1+u)^2} + 1}.
\end{align}
Therefore,
\begin{align}
    & \text{Area}(M) = \intsphere (1+u)^n \sqrt{\frac{|\nabla u|^2}{(1+u)^2} + 1} \textit{ dA}.
\end{align}We list a few more relevant formulae below:
\begin{align}
   & |\Omega| = \frac{1}{n+1}\intsphere (1 +u)^{n+1} \textit{dA},
   \label{volume_formula}
    \\
    &|\Omega \Delta B| =    \sum_{k=1}^{n+1} \int_{\sphere} \frac{1}{n+1}{n +1 \choose k}|u|^k \textit{dA},
    \label{symmetric_diff_formula}
    \\
    &\text{bar}(\Omega) = \frac{1}{\text{Area}(\partial B)}\intsphere (1+u)^{n+2} x \textit{dA}
    \label{bar_formula}.
\end{align}

Finally, we consider how to compute $\nabla_j [T_m]^j_i(D^2u)$ for nearly spherical sets. This is particularly useful when applying integration by parts on $I_k(\Omega)$ in Section 4. See \cite{MR3107515} for a similar computation.  We compute
\begin{align}
    \nabla_j [T_m]^{j}_i(D^2u) 
     &=
     \frac{1}{m!}  \nabla_j\delta^{jj_1j_2...j_{m}}_{ii_1i_2...i_{m}} u^{i_1}_{j_1}u^{i_2}_{j_2}\cdot \cdot\cdot u^{i_m}_{j_m}
   \nonumber  \\
     &=\frac{m}{m!} \delta^{jj_1j_2...j_{m}}_{ii_1i_2...i_{m}} 
      (\nabla_ju^{i_1}_{j_1})u^{i_2}_{j_2}\cdot \cdot\cdot u^{i_m}_{j_m}.
    \end{align}
    Note that
  \begin{align}
     \delta^{jj_1j_2...j_{m}}_{ii_1i_2...i_{m}} (\nabla_j u^{i_1}_{j_1})u^{i_2}_{j_2}\cdot \cdot\cdot u^{i_m}_{j_m}
     &=
      -\delta^{j_1jj_2...j_{m}}_{ii_1i_2...i_{m}} (\nabla_ju^{i_1}_{j_1})u^{i_2}_{j_2}\cdot \cdot\cdot u^{i_m}_{j_m}.
  \end{align}
  We obtain
    \begin{align}
       \nabla_j [T_m]^j_i(D^2u)    &=  
       \frac{1}{2(m-1)!}
          \delta^{jj_1j_2...j_{m}}_{ii_1i_2...i_{m}} 
          (\nabla_ju^{i_1}_{j_1} -\nabla_{j_1}u^{i_1}_{j})
          u^{i_2}_{j_2}
          \cdot \cdot\cdot u^{i_m}_{j_m}
     \nonumber  \\
    &=   \frac{1}{2(m-1)!}
    \delta^{jsj_1...j_{m-1}}_{ili_1...i_{m-1}}
    (u_pR^{pl}_{\;\;\;sj})
     u^{i_1}_{j_1}
          \cdot \cdot\cdot u^{i_{m-1}}_{j_{m-1}},
    \label{derivativenewton}
\end{align}
where $R^{pl}_{\;\;\;sj}$ is the curvature tensor on $\Omega$.
On $\mathbb{S}^n$, we know by the Gauss equation,
\begin{align}
    R^{pl}_{\;\;\;sj} &= h^p_{s}h^l_{j} - h^p_{j}h^l_{s}
=\delta^p_{s}\delta^l_{j} - \delta^p_{j}\delta^l_{s}.
\end{align}
Therefore,
 \begin{align}
       \nabla_j [T_m]^j_i(D^2u)   
    &=   
    \frac{1}{(m-1)!}\frac{1}{2}
    u_p(\delta^p_{s}\delta^l_{j} - \delta^p_{j}\delta^l_{s})
    \delta^{jsj_1...j_{m-1}}_{ili_1...i_{m-1}}
     u^{i_1}_{j_1}
          \cdot \cdot\cdot u^{i_{m-1}}_{j_{m-1}}
    \nonumber   
    \\
     &=   
    \frac{1}{(m-1)!}\frac{1}{2}
    \bigg (
    u_s
    \delta^{jsj_1...j_{m-1}}_{iji_1...i_{m-1}}
     u^{i_1}_{j_1}
          \cdot \cdot\cdot u^{i_{m-1}}_{j_{m-1}}
          -
       u_j
    \delta^{jlj_1...j_{m-1}}_{ili_1...i_{m-1}}
     u^{i_1}_{j_1}
          \cdot \cdot\cdot u^{i_{m-1}}_{j_{m-1}}
          \bigg)
    \nonumber  
    \\
        &=   
    \frac{-1}{(m-1)!}
       u_j
    \delta^{jlj_1...j_{m-1}}_{ili_1...i_{m-1}}
     u^{i_1}_{j_1}
          \cdot \cdot\cdot u^{i_{m-1}}_{j_{m-1}}
\nonumber
          \\
          &=
    -(n-m)
    u_j [T_{m-1}]^j_i(D^2u).
\end{align}

\section{Computation of $\sigma_k(L)$ for nearly spherical sets}

Our goal is to control a lower bound on the $(k,m)$-isoperimetric deficit $\delta_{k,m}(\Omega)$, where $M:= \partial \Omega$ is a nearly spherical set. In this section, we focus on calculating $\sigma_k(h^i_j)$, where

\begin{align}
    \sigma_k(h^i_j)
   & =
     \frac{1}{k!}
     \delta_{i_1\cdot\cdot\cdot i_k}^{j_1\cdot\cdot\cdot j_k}
    \bigg (\frac{-u^{i_1}_{j_1}}{D(1+u)}  
     + \frac{\delta^{i_1}_{j_1}}{D} 
     +  \frac{u_{j_1}^{s}u_su^{i_1}}{D^3(1+u)}
     + \frac{u^{i_1}u_{j_1}}{D^3} 
     \bigg )
      \nonumber
   \\
   &\cdot \cdot \cdot 
     \bigg(
     \frac{-u^{i_k}_{j_k}}{D(1+u)}  + \frac{\delta^{i_k}_{j_k}}{D} +  \frac{u_{j_k}^su_su^{i_k}}{D^3(1+u)}
     + \frac{u^{i_k}u_{j_k}}{D^3} 
     \bigg )
     \label{sigmaformula}.
\end{align}
In particular, the mean curvature on $M$ is given by
\begin{align}
    \sigma_1(h^i_j)
   &=
     \delta_{i_1}^{j_1}
    \bigg (\frac{-u^{i_1}_{j_1}}{D(1+u)}  
     + \frac{\delta^{i_1}_{j_1}}{D} +  \frac{u_{j_1}^su_su^{i_1}}{D^3(1+u)}
     + \frac{u^{i_1}u_{j_1}}{D^3} 
     \bigg )
     \nonumber 
     \\
     &= \frac{-\Delta u}{D(1+u)}  
     + \frac{n}{D} +  \frac{u_{i}^su_su^{i}}{D^3(1+u)}
     + \frac{|\nabla u|^2}{D^3}. 
\end{align}
Computing $\sigma_k(h^i_j)$ for any $k >0$ requires a bit more work, as we show in the following lemma.
\begin{lemma}
\label{sigmak}
Suppose $\Omega \subseteq R^{n+1}$ where $M = \{ (1+u(x))x: x \in \partial B  \}$ and $u \in C^2(\partial B)$. Then
\begin{align}
     \sigma_k(h^i_j)
=
\frac{1}{((1+u)^2 + |\nabla u|^2)^{\frac{k+2}{2}}}
 \sum_{m = 0}^{k}
 \frac{(-1)^m{n - m \choose k -m} }{(1+u)^m}
 \bigg((1+u)^2\sigma_m(D^2u)
  +
\frac{n+k-2m}{n-m} u^iu_j [T_m]^j_i(D^2u)
\bigg ).
\end{align}
\end{lemma}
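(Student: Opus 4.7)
The plan is to start from the multilinear expansion in equation \eqref{sigmaformula} and collapse it using the antisymmetry of the generalized Kronecker delta. Decompose the integrand into its four pieces
$A^i_j = -u^i_j/(D(1+u))$, $B^i_j = \delta^i_j/D$, $C^i_j = u^s_j u_s u^i/(D^3(1+u))$, and $E^i_j = u^i u_j/D^3$,
and expand the $k$-fold product into $4^k$ monomials. The key observation is that both $C$ and $E$ carry the factor $u^i$ as their upper index, so any monomial containing two or more factors of type $C$ or $E$ produces a product symmetric in two upper indices $i_a, i_b$, which is annihilated by $\delta^{j_1\cdots j_k}_{i_1\cdots i_k}$. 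Thus only three kinds of monomials survive: (i) all factors are $A$ or $B$; (ii) exactly one factor is $E$ and the rest are $A$ or $B$; (iii) exactly one factor is $C$ and the rest are $A$ or $B$.

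Next, I would parametrize each case by $p$, the number of $A$-type factors, and use the standard contraction $\delta^{j_1\cdots j_k}_{i_1\cdots i_k}\delta^{i_{p+1}}_{j_{p+1}}\cdots\delta^{i_k}_{j_k} = \frac{(n-p)!}{(n-k)!}\delta^{j_1\cdots j_p}_{i_1\cdots i_p}$ to reduce each term to a trace. In Case (i) this trace is $p!\,\sigma_p(D^2u)$, and after absorbing $k\binom{k-1}{p}/k!$ it yields the contribution $\tfrac{(-1)^p\binom{n-p}{k-p}}{D^k(1+u)^p}\sigma_p(D^2u)$. In Cases (ii) and (iii) the surviving trace is identified as $u^i u_j [T_p]^j_i(D^2u)$. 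In Case (iii), the extra factor $u^s_{j_1}u_s u^{i_1}$ combines with the reduced delta to give the expression $u_s u^{i_1} u^s_{j_1}[T_p]^{j_1}_{i_1}$; applying the Newton-tensor identity $u^j_s[T_p]^i_j = \delta^i_s\sigma_{p+1} - [T_{p+1}]^i_s$ from Section 2.1 (valid since $u$ commutes with $T_p(u)$) converts this to $|\nabla u|^2\sigma_{p+1} - u^i u_j[T_{p+1}]^j_i$.

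The final step is purely bookkeeping. Rewrite the Case (i) contribution using $D^2 = (1+u)^2 + |\nabla u|^2$, so it splits into a $(1+u)^2\sigma_m$ piece (which already matches the desired first summand) and a $|\nabla u|^2\sigma_m$ piece. The latter exactly cancels the $|\nabla u|^2\sigma_m$ contribution coming out of Case (iii) for every $m\geq 1$, leaving only the $m=0$ residue $\binom{n}{k}|\nabla u|^2/D^{k+2}$, which I reinterpret as $\binom{n}{k}u^i u_j[T_0]^j_i/D^{k+2}$. Collecting the remaining $u^iu_j[T_m]^j_i$ pieces from Cases (ii), (iii), and the residue produces coefficients $\binom{n-m-1}{k-m-1}+\binom{n-m}{k-m}$ for $1\leq m\leq k-1$ (and the corresponding boundary expressions for $m=0,k$). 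Invoking the Pascal-type identity $\binom{n-m-1}{k-m-1}+\binom{n-m}{k-m} = \frac{n+k-2m}{n-m}\binom{n-m}{k-m}$ (a direct computation) recasts these into the stated factor $\frac{n+k-2m}{n-m}\binom{n-m}{k-m}$, completing the proof.

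The main obstacle I expect is not any single tensor manipulation but the combinatorial bookkeeping: keeping track of signs, the denominators $(1+u)^m$ and $D$-powers, and ensuring that the reindexing $p\mapsto m$ in Cases (ii) and (iii) is aligned so that the cancellation of $|\nabla u|^2\sigma_m$ terms between Cases (i) and (iii) happens exactly. Once the Pascal-type identity is applied, the three cases fuse into the single clean sum stated in the lemma.
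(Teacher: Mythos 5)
Your proposal follows the paper's proof essentially verbatim: you expand the $k$-fold product, discard monomials with two or more rank-one factors by antisymmetry of the generalized Kronecker delta (the paper's Lemma~\ref{zerolem}), reduce the three surviving cases by the delta-contraction identity, apply the Newton-tensor identity $u^s_j[T_m]^j_i=\delta^s_i\sigma_{m+1}-[T_{m+1}]^s_i$ to the Case (iii) trace, and finish by multiplying Case (i) by $D^2/D^2$ so that the $|\nabla u|^2\sigma_m$ pieces cancel and the Pascal-type identity ${n-m-1\choose k-m-1}+{n-m\choose k-m}=\tfrac{n+k-2m}{n-m}{n-m\choose k-m}$ merges the remaining $u^iu_j[T_m]^j_i$ contributions. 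One small slip in the write-up: the permutation factor in Case (i) should be ${k\choose p}/k!$, not $k{k-1\choose p}/k!$ (the latter counts orderings with one marked factor plus $p$ copies of $A$, which is what Cases (ii) and (iii) need), though the contribution you state for Case (i) is nonetheless the correct one.
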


\begin{proof}

In this proof we set
\begin{align} 
D :=\sqrt{(1+u)^2 + |\nabla u|^2} .
\end{align}
We expand out each term of $\sigma_k(h^i_j)$ in \eqref{sigmaformula}. Many of the terms in the expansion of this sum turn out to be zero. We compute the terms in the following cases:

\begin{enumerate}
    \item $m\geq 0$ instances of $\frac{-u^i_{j}}{D(1+u)}$, and the rest are in the form $\frac{\delta^i_j}{D}$.
    
First, consider the sum of all terms where $\frac{-u^i_{j}}{D(1+u)}$ occurs in the first $m$ terms. This equals
  \begin{align}
     & \frac{1}{k!}  \delta_{i_1\cdot\cdot\cdot i_k}^{j_1\cdot\cdot\cdot j_k}
     \frac{-u^{i_1}_{j_1}}{D(1+u)} 
     \cdot \cdot \cdot
     \frac{-u_{j_m}^{i_m}}{D(1+u)}
     \cdot \frac{\delta^{i_{m+1}}_{j_{m+1}}}{D}
     \cdot \cdot \cdot
     \frac{\delta^{i_k}_{j_k}}{D}
     \nonumber
     \\
     &= \frac{1}{k!}\ \frac{(-1)^m}{D^k(1+u)^{m}}  \delta_{i_1\cdot\cdot\cdot i_m}^{j_1\cdot\cdot\cdot j_m}
     u^{i_1}_{j_1}
     \cdot \cdot \cdot
     u^{i_m}_{j_m}
     \cdot {n-m \choose k-m} (k-m)!
        \nonumber
   \\
     &= \frac{{n-m \choose k-m}}{{k \choose m} }\frac{(-1)^m \sigma_m(D^2u) }{D^k(1+u)^{m}}.
     \label{unordered}
 \end{align}
 However,  to account for any permutation of the ordering of the terms above, we multiply \eqref{unordered} by ${k \choose m}$. 
 
 So, the sum of all terms in this case is:
 \begin{align}
     \frac{(-1)^m {n -m \choose k -m} \sigma_m(D^2u)}{D^k(1+u)^{m}} .
     \end{align}

\item One instance of $\frac{u^iu_j}{D^3}$ and $m\geq 1$  instances of $\frac{-u_{j}^i}{D(1+u)}$.

The sum of these terms is equal to:
\begin{align}
     &\frac{(-1)^m k \cdot {k -1 \choose m}}{k!}\delta_{i_1\cdot\cdot\cdot i_k}^{j_1\cdot\cdot\cdot j_k}
     \frac{u^{i_{1}}u_{j_{1}}}{D^3}
     \frac{u^{i_{2}}_{j_{2}}}{D(1+u)} 
     \cdot \cdot \cdot
     \frac{u_{j_{m+1}}^{i_{m+1}}}{D(1+u)}
     \cdot 
     \frac{\delta^{i_{m+2}}_{j_{m+2}}}{D}
     \cdot \cdot \cdot
     \frac{\delta^{i_k}_{j_k}}{D}
     \nonumber
\\
       &=\frac{(-1)^m{k -1 \choose m}}{(k-1)!} \frac{1}{{D^{k+2}(1+u)^{m}}}
      \delta_{i_1\cdot\cdot\cdot i_{m+1}}^{j_1\cdot\cdot\cdot j_{m+1}}
     u^{i_{1}}u_{j_{1}}
     u^{i_{2}}_{j_{2}}
     \cdot \cdot \cdot
    u_{j_{m+1}}^{i_{m+1}}
    { n - (m+1) \choose k - (m+1)}(k - (m+1))!
     \nonumber
     \\
       &={ n - (m+1) \choose k - (m+1)}\frac{(-1)^m}{D^{k+2}(1+u)^{m}}
         u^iu_j [T_m]^j_i(D^2u).
\end{align}

 \item One instance of $\frac{u_{j}^su_su^{i}}{D^3(1+u)}$ and $m\geq1$  instances of  $ \frac{u_{j}^{i}}{D(1+u)}$.
 
 In this case,  the sum of all the terms is:
 \begin{align}
    & \frac{(-1)^m {k \choose m}(k-m)}{k!}\delta_{i_1\cdot\cdot\cdot i_k}^{j_1\cdot\cdot\cdot j_k} \frac{u_{j_1}^su_{s}u^{i_1}}{D^3(1+u)}
   \frac{u_{j_2}^{i_2}}{D(1+u)}
   \cdot \cdot \cdot 
    \frac{u_{j_{m+1}}^{i_{m+1}}}{D(1+u)}
   \frac{\delta^{i_{m+2}}_{j_{m+2}}}{D} 
   \cdot \cdot \cdot
   \frac{\delta^{i_k}_{j_k}}{D}
   \nonumber
  \\
  &=  \frac{(-1)^m }{k!}\frac{{k \choose m}(k-m)}{D^{2 + k}(1+u)^{ m +1}}
  \delta_{i_1...i_{m+1}}^{j_1...j_{m+1}} 
   u_{j_1}^su_{s}u^{i_1}
   u_{j_2}^{i_2}
   \cdot \cdot \cdot 
   u_{j_{m+1}}^{i_{m+1}}
   \cdot 
   {n - (m+1) \choose k - (m+1)}(k -(m+1))!
    \nonumber
      \\
  &=  {n - (m+1) \choose k - (m+1)} \frac{(-1)^m}{D^{2 + k}(1+u)^{ m + 1}}u_{j}^su_{s}u^{i}
 [T_m]^j_i(D^2u).
\end{align}

\item When there  are either two instances of $\frac{u^iu_j}{D^3(1+u)^3}$,  two instances of $\frac{u_{js}u^su^{i}}{D^3(1+u)^4}$, or one instance of $\frac{u^iu_j}{D^3(1+u)^3}$ and one instance of $\frac{u_{js}u^su^{i}}{D^3(1+u)^4}$.

In this case, we apply the following Lemma \ref{zerolem} to conclude the sum of all these terms is zero.

\end{enumerate}

Next, we simplify the expression for $\sigma_k(h^i_j)$ by noting the identity
\begin{align}
    u_{j}^s[T_m]^j_i(D^2u) = \delta^{s}_i\sigma_{m+1}(D^2u)  - [T_{m+1}]^{s}_i(D^2u). 
    \end{align}
We compute
 \begin{align}
    \sigma_k(h^i_j) = 
   & \sum_{m = 0}^k\frac{ {n -m\choose k -m}(-1)^m \sigma_m(D^2u)}{D^k(1+u)^{m}}
  +\sum_{m=0}^{k-1}{ n - (m+1) \choose k - (m+1)}\frac{(-1)^m  u^iu_j [T_m]^j_i(D^2u)}{D^{k+2}(1+u)^{m}}
 \nonumber
 \\
         &+  \sum_{m = 0}^{k-1}{n - (m+1) \choose k - (m+1)} \frac{(-1)^mu_{j}^su_{s}u^{i}
 [T_m]^j_i(D^2u)}{D^{k+2}(1+u)^{ m +1}}
\nonumber
\\
   &= \sum_{m = 0}^k\frac{ {n -m\choose k -m}(-1)^m \sigma_m(D^2u)}{D^k(1+u)^{m}}
  +\sum_{m=0}^{k-1}{ n - (m+1) \choose k - (m+1)}\frac{(-1)^m  u^iu_j [T_m]^j_i(D^2u)}{D^{k+2}(1+u)^{m }}
\nonumber
\\
         &+  \sum_{m = 0}^{k-1}{n - (m+1) \choose k - (m+1)} \frac{(-1)^m(-u^{i}u_{j}[T_{m+1}]^{j}_i(D^2u) + |\nabla u|^2\sigma_{m+1}(D^2u))}{D^{k+2}(1+u)^{ m + 1}}
\nonumber
\\
   &= \sum_{m = 0}^k\frac{[(1+u)^2 +|\nabla u|^2] {n -m\choose k -m}(-1)^m \sigma_m(D^2u)}{D^{k+2}(1+u)^{m}}
  +\sum_{m=0}^{k-1}{ n - (m+1) \choose k - (m+1)}\frac{(-1)^m  u^iu_j [T_m]^j_i(D^2u)}{D^{k+2}(1+u)^{m }}
  \nonumber
  \\
&+  \sum_{m = 1}^{k}{n - m \choose k - m} \frac{(-1)^{m}
(u^{i}u_{j}[T_m]^j_i(D^2u) - |\nabla u|^2\sigma_{m}(D^2u))}{D^{k+2}(1+u)^{ m}}
\nonumber
\\
   &=
   \frac{1}{D^{k+2}}
 \sum_{m = 0}^{k}
 \frac{(-1)^m{n - m \choose k -m} }{(1+u)^m}
 \bigg((1+u)^2\sigma_m(D^2u)
  +
\frac{n+k-2m}{n-m} u^iu_j [T_m]^j_i(D^2u)
\bigg ).
 \end{align}
\end{proof}

Now we give a quick proof of the lemma that was used in case 4 in Lemma \ref{sigmak}.
\begin{lemma}
\label{zerolem}
 Suppose $2 \leq k \leq n$ where $M_1,..., M_{k-2}$  are $n \times n$ matrices, and $w_1, w_2$, $v$ are $n$-dimensional vectors. Then, 
 \begin{align}
     \Sigma_k (w_1v^T, w_2v^T, M_1, M_2, M_3,..., M_{k-2}) =0. 
      \end{align}
\end{lemma}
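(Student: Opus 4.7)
The plan is to exploit the antisymmetry of the generalized Kronecker delta that sits at the heart of $\Sigma_k$. Writing out the definition,
\begin{align*}
\Sigma_k(w_1 v^T,\, w_2 v^T,\, M_1,\ldots,M_{k-2})
= \frac{1}{(k-1)!}\,\delta^{j_1 j_2 j_3 \cdots j_k}_{i_1 i_2 i_3 \cdots i_k}\, w_1^{i_1}\,v_{j_1}\, w_2^{i_2}\,v_{j_2}\, (M_1)^{i_3}_{j_3}\cdots (M_{k-2})^{i_k}_{j_k}.
\end{align*}
The two rank-one matrices contribute the factor $v_{j_1} v_{j_2}$, which is manifestly symmetric under the transposition $j_1 \leftrightarrow j_2$. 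On the other hand, the generalized Kronecker symbol is totally antisymmetric in its upper indices, so in particular
\begin{align*}
\delta^{j_1 j_2 j_3 \cdots j_k}_{i_1 i_2 i_3 \cdots i_k} = -\delta^{j_2 j_1 j_3 \cdots j_k}_{i_1 i_2 i_3 \cdots i_k}.
\end{align*}

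The main (and only) step is then to relabel the dummy indices $j_1 \leftrightarrow j_2$ in the contracted sum. Doing so leaves $w_1^{i_1} w_2^{i_2} v_{j_1} v_{j_2}$ and the remaining factors $(M_1)^{i_3}_{j_3}\cdots (M_{k-2})^{i_k}_{j_k}$ untouched, while it flips the sign of the delta. This shows that $\Sigma_k(w_1 v^T, w_2 v^T, M_1,\ldots, M_{k-2})$ equals its own negative, and therefore vanishes.

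There is no real obstacle; the lemma is a pure symmetry statement. The only thing to verify carefully is that the hypothesis $k \leq n$ is enough to ensure that the generalized delta is not identically zero on the relevant index range (so that the argument is not vacuous, though of course the conclusion holds either way), and that the symmetry/antisymmetry pairing above is applied to precisely two of the $k$ slots, which is exactly the content of the assumption that the first two matrix arguments share the common vector $v$ on the right.
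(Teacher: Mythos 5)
Your proof is correct and is essentially the same argument the paper uses: the paper likewise rewrites $v_{j_1}v_{j_2}=v_{j_2}v_{j_1}$, invokes the antisymmetry of the generalized Kronecker delta in the pair $(j_1,j_2)$, and relabels the dummy indices to conclude $\Sigma_k=-\Sigma_k$. The only cosmetic difference is the order in which you apply the relabeling versus the commutation of the scalar factors.
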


\begin{proof}
We compute,
\begin{align}
    \Sigma_k (w_1v^T, w_2v^T, M_1, M_2,..., M_{k-2}) 
    &= \frac{1}{(k-1)!}\delta_{i_1i_2...i_k}^{j_1j_2...j_k}
     w_1^{i_1}v_{j_1}
      w_2^{i_2}v_{j_2}
      {M_1}^{i_3}_{j_3}
      \cdot \cdot \cdot 
       {M_{k-2}}^{i_k}_{j_k}
       \nonumber
       \\
         &=
         \frac{1}{(k-1)!}\delta_{i_1i_2...i_k}^{j_1j_2...j_k}
     w_1^{i_1}v_{j_2}
      w_2^{i_2}v_{j_1}
      {M_1}^{i_3}_{j_3}
      \cdot \cdot \cdot 
       {M_{k-2}}^{i_k}_{j_k}
       \nonumber
     \\
     &=
        - \frac{1}{(k-1)!}\delta_{i_1i_2...i_k}^{j_2j_1...j_k}
     w_1^{i_1}v_{j_2}
      w_2^{i_2}v_{j_1}
      {M_1}^{i_3}_{j_3}
      \cdot \cdot \cdot 
       {M_{k-2}}^{i_k}_{j_k}
       \nonumber
       \\
       &= 
       - \Sigma_k (w_1v^T, w_2v^T, M_1, M_2,..., M_{k-2}).
\end{align}
Since 
     $\Sigma_k (w_1v^T, w_2v^T, M_1, M_2,..., M_{k-2}) = -  \Sigma_k (w_1v^T, w_2v^T, M_1, M_2,..., M_{k-2}) $,
we conclude
 \begin{align}
     \Sigma_k (w_1v^T, w_2v^T, M_1, M_2, M_3,..., M_{k-2}) =0. 
      \end{align}
\end{proof}

\section{$I_k(\Omega)$ for nearly spherical sets}
We continue to look at $I_k(\Omega)$ for starshaped domains, but now we add in the additional assumption that $||u||_{W^{2,\infty}}< \epsilon$,   making $M:=\partial \Omega$ a \textit{nearly spherical set} as described in Definition \ref{nearlysphere}.
Note,
  \begin{align}
      \int_{M} \sigma_k(h_j^i) d\mu =\intsphere \sigma_k(h_j^i)(1+u)^n\sqrt{1 + \frac{|\nabla u|^2}{(1+u)^2}} \textit{ dA}. 
        \end{align}
  Using our formula for $\sigma_k(h_j^i)$ in Lemma \ref{sigmak}, we have the following expression for $ \int_{M} \sigma_k(h_j^i) d\mu $:
  \begin{align}
\intsphere  \sum_{m = 0}^{k}  \frac{ (-1)^m{n - m \choose k -m}(1+u)^{n-m-1} }{((1+u)^2 + |\nabla u|^2)^{\frac{k+1}{2}}}
 \bigg((1+u)^2\sigma_m(D^2u)
  +
\frac{n+k-2m}{n-m} u^iu_j [T_m]^j_i(D^2u)
\bigg )\textit{dA}.
\end{align}

 Later, in the main theorems, our analysis deals mainly with lower order terms in $O(|u|)$ and $O(|\nabla u|)$. In the following lemma, we expand out $\frac{1}{((1+u)^2 + |\nabla u|^2)^{(k+1)/2}}$ in the integral using its Taylor expansion, and we are able to group all of the higher order terms in $O(\epsilon)||u||_{L^2}^2 + O(\epsilon)||\nabla u||_{L^2}^2$.

\begin{lemma}
\label{taylor}
Suppose $u\in C^1( \partial B)$ and for sufficiently small $\epsilon>0$ that $||u||_{L^{\infty}}, ||\nabla u||_{L^{\infty}} < \epsilon$. Then, 
\begin{align}
    \frac{1}{(|\nabla u|^2 + (1+u)^2)^{\frac{m}{2}}}
      &=  1 -mu + \frac{m(m+1)}{2}u^2 -\frac{m}{2}|\nabla u|^2
    +O(\epsilon)u^2 + O(\epsilon)|\nabla u|^2.
\end{align}
\end{lemma}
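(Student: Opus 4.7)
The plan is a direct Taylor expansion. I would set $t \defeq 2u + u^2 + |\nabla u|^2$, so that $|\nabla u|^2 + (1+u)^2 = 1 + t$ and the quantity to expand becomes $(1+t)^{-m/2}$. Since $\|u\|_{L^\infty}, \|\nabla u\|_{L^\infty} < \epsilon$, we have $|t| \leq 2\epsilon + 2\epsilon^2 \leq C\epsilon$, so for sufficiently small $\epsilon$ the expansion
\[
(1+t)^{-m/2} = 1 - \tfrac{m}{2}t + \tfrac{m(m+2)}{8}t^2 + R(t), \qquad |R(t)| \leq C|t|^3,
\]
is valid with a uniform remainder bound.

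Next, I would substitute the definition of $t$ and sort resulting monomials by their total degree in $u$ and $|\nabla u|$. The linear contribution is
\[
-\tfrac{m}{2}t = -mu - \tfrac{m}{2}u^2 - \tfrac{m}{2}|\nabla u|^2,
\]
while expanding the square gives
\[
t^2 = 4u^2 + 4u^3 + 4u|\nabla u|^2 + u^4 + 2u^2|\nabla u|^2 + |\nabla u|^4,
\]
so $\tfrac{m(m+2)}{8}t^2$ yields the genuinely quadratic piece $\tfrac{m(m+2)}{2}u^2$ together with monomials of total degree at least three.

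The bookkeeping step is to check that every higher-order monomial, together with the Taylor remainder, is absorbed into $O(\epsilon)u^2 + O(\epsilon)|\nabla u|^2$. This follows from the pointwise bounds $|u^3| \leq \epsilon\, u^2$, $|u|\,|\nabla u|^2 \leq \epsilon|\nabla u|^2$, $u^4 \leq \epsilon^2 u^2$, $u^2|\nabla u|^2 \leq \epsilon^2|\nabla u|^2$, and $|\nabla u|^4 \leq \epsilon^2|\nabla u|^2$, along with the remainder estimate $|R(t)| \leq C|t|^3 \leq C\epsilon\, t^2 \leq C'\epsilon(u^2 + |\nabla u|^2)$, using that $t^2 \leq C(u^2 + |\nabla u|^2)$ for small $\epsilon$. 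Merging the two $u^2$ contributions gives $-\tfrac{m}{2}u^2 + \tfrac{m(m+2)}{2}u^2 = \tfrac{m(m+1)}{2}u^2$, producing exactly the claimed expansion. There is no genuine obstacle here—only careful algebra to combine the quadratic contributions coming from $-\tfrac{m}{2}t$ and $\tfrac{m(m+2)}{8}t^2$, and to verify that each cubic-or-higher monomial is bounded by $\epsilon$ times a quadratic one.
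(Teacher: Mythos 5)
Your proof is correct and rests on the same basic idea as the paper's---Taylor-expand the denominator in the small quantity $t = 2u + u^2 + |\nabla u|^2$ and absorb everything of total degree three or higher into $O(\epsilon)(u^2 + |\nabla u|^2)$. The one difference is in the decomposition: you expand $(1+t)^{-m/2}$ directly to second order with a uniform cubic remainder, whereas the paper first computes the full Taylor series of $(1+t)^{-1/2}$ (using the explicit coefficient bound $|c_n| \leq 1$ to control the tail), and then raises the resulting truncated expansion to the $m$-th power via the binomial theorem. Your one-step version is a bit more direct---there is no need to compute or estimate the full sequence of binomial coefficients for the $-1/2$ power, only the first two coefficients of $(1+t)^{-m/2}$ and a Lagrange-type remainder bound---and the algebra checks out: $-\tfrac{m}{2}u^2 + \tfrac{m(m+2)}{2}u^2 = \tfrac{m(m+1)}{2}u^2$, the linear term $-mu$, and the $-\tfrac{m}{2}|\nabla u|^2$ term all match, and your pointwise bounds correctly show every remaining monomial is $O(\epsilon)u^2$ or $O(\epsilon)|\nabla u|^2$.
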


\begin{proof}

First, we will expand out
$(|\nabla u|^2 + (1+u)^2)^{\frac{1}{2}})^{-1/2}$, which we rewrite as $( 1 + 2u + u^2 + |\nabla u|^2 )^{-1/2}$.
Setting $f(x) =( 1 +x)^{-1/2} $, we obtain
\begin{align}
    f^{n}(0) = \frac{(-1)^n\prod_{m = 1}^n (2m - 1)}{2^n}.
\end{align}
In the radius of convergence for its Taylor expansion, 
$f(x) = \sum_{n=0}^{\infty} c_n x^n $,
where $c_n =  \frac{(-1)^n\prod_{m = 1}^n (2m - 1)}{n!2^n}$ for $n \geq 1$ and $c_0 = 1$.
Note that
\begin{align}
    |c_n| &= \frac{\prod_{m = 1}^n (2m - 1)}{n!2^n}
    =\frac{(2n)!}{(n!)^2 4^n}
   \leq 1.
\end{align}
Then, for small $|u|$ and $|\nabla u|$, and setting $g(u) := 2u + u^2 + |\nabla u|^2$,
\begin{align}
\frac{1}{(|\nabla u|^2 + (1+u)^2)^{\frac{1}{2}}}
    &= 1 - \frac{1}{2}(g(u)) + \frac{3}{8}(g(u))^2
    + \sum_{n =3 }^{\infty}c_n (g(u))^n.
\end{align}
Furthermore, for $|g(u)| \leq \frac{1}{2}$,
\begin{align}
 \bigg | \sum_{n =3 }^{\infty}c_n (g(u))^n \bigg |
   & \leq \sum_{n =3 }^{\infty} |g(u)|^n
    =\frac{|g(u)|^3}{1 - g(u)}
  \leq 2|2u + u^2 + |\nabla u|^2|^3
 =  O(\epsilon)u^2 + O(\epsilon)|\nabla u|^2.
\end{align}
Therefore, 
\begin{align}
  \frac{1}{(|\nabla u|^2 + (1+u)^2)^{\frac{1}{2}}}
    &= 1 - \frac{1}{2}(2u + u^2 + |\nabla u|^2)+ \frac{3}{8}(2u + u^2 + |\nabla u|^2)^2
    + \sum_{n =3 }^{\infty}c_n (g(u))^n
    \nonumber
      \\
    &=1 -u + u^2 -\frac{1}{2}|\nabla u|^2 + O(\epsilon)u^2 + O(\epsilon)|\nabla u|^2.
\end{align}
We conclude,
\begin{align}
    \frac{1}{(|\nabla u|^2 + (1+u)^2)^{\frac{m}{2}}}
    &=
    \bigg (1 + (-u + u^2 -\frac{1}{2}|\nabla u|^2) + O(\epsilon)u^2 + O(\epsilon)|\nabla u|^2 \bigg)^m
    \nonumber
    \\
    &= \sum_{j= 0}^m {m \choose j}(-u + u^2 -\frac{1}{2}|\nabla u|^2)^j
    +O(\epsilon)u^2 + O(\epsilon)|\nabla u|^2 
   \nonumber
   \\
      &= 1 -mu + \frac{m(m+1)}{2}u^2 -\frac{m}{2}|\nabla u|^2
    +O(\epsilon)u^2 + O(\epsilon)|\nabla u|^2.
\end{align}
\end{proof}

Using the expansion in Lemma \ref{taylor}, we further expand $\int_{M} \sigma_k(h_j^i) \textit{ d}\mu$. We perform integration by parts on many of the terms to convert them to include $|\nabla u|^2$. The new format of the integral will be useful later on when we find a lower bound involving the Fraenkel asymmetry.

\begin{lemma}
\label{expansion}
Suppose $\Omega= \{ (1+u(\frac{x}{|x|}))x: x \in B  \} \subseteq \mathbb{R}^{n+1}$ and $u \in C^3(\partial B)$.
If $||u||_{W^{2,\infty}}< \epsilon$, then
 \begin{align}
      \int_{M} \sigma_k(h_j^i) d\mu
    &=
    \intsphere 
  {n \choose k} +{n \choose k}  ( n -k)u+{n \choose k}\frac{(n - k)(n-k-1)}{2}u^2
\nonumber
\\
&+\sum_{m = 0}^{k}  (-1)^{m}{n - m \choose k - m}\frac{(n-k)(k+1)}{2(m+1)(n-m)} |\nabla u|^2 \sigma_{m}(D^2u)
\textit{ dA} 
+  O(\epsilon)||\nabla u||^2_{L^2} +  O(\epsilon)||u||^2_{L^2}.
 \end{align}
\end{lemma}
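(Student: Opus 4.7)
The plan is to substitute Lemma~\ref{sigmak}'s formula for $\sigma_k(h^i_j)$ into $\int_M \sigma_k(L)\,d\mu = \int_{\partial B}\sigma_k(h^i_j)(1+u)^n\sqrt{1+|\nabla u|^2/(1+u)^2}\,dA$, producing an integrand of the form $\sum_{m=0}^k \frac{(-1)^m \binom{n-m}{k-m}(1+u)^{n-m-1}}{D^{k+1}}\bigl[(1+u)^2\sigma_m(D^2u) + \tfrac{n+k-2m}{n-m} u^i u_j [T_m]^j_i\bigr]$, where $D = \sqrt{(1+u)^2+|\nabla u|^2}$. I would first expand the scalar prefactor to second order: apply Lemma~\ref{taylor} with exponent $k+1$ to $1/D^{k+1}$ and combine with the binomial expansion $(1+u)^{n-m+1} = (1+u)^{n-m-1}(1+u)^2$. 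The algebraic identity $(a+1)(a-2b)+b(b+1) = (a-b)(a-b+1)$ with $a=n-m$, $b=k+1$ yields
\[
\frac{(1+u)^{n-m+1}}{D^{k+1}} = 1 + (n-m-k)u + \tfrac{(n-m-k)(n-m-k-1)}{2}u^2 - \tfrac{k+1}{2}|\nabla u|^2 + O(\epsilon)(u^2 + |\nabla u|^2).
\]
Since $u^i u_j [T_m]^j_i$ is already quadratic in $|\nabla u|$, its prefactor only contributes its leading constant $\tfrac{n+k-2m}{n-m}$, with higher-order terms absorbed into $O(\epsilon)\|\nabla u\|^2_{L^2}$.

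Next, I would apply integration by parts via the key identity $\nabla_j[T_m]^j_i(D^2u) = -(n-m)u_j[T_{m-1}]^j_i(D^2u)$ from equation~\eqref{derivativenewton} together with $(m+1)\sigma_{m+1}(D^2u) = u^i_j [T_m]^j_i(D^2u)$. Four IBP consequences are needed: for $m\geq 2$, $\int\sigma_m(D^2u)\,dA = \tfrac{n-m+1}{m}\int u^iu_j [T_{m-2}]^j_i\,dA$ (and $=0$ for $m=1$); for $m\geq 1$, $\int u\sigma_m(D^2u)\,dA = -\tfrac{1}{m}\int u^iu_j [T_{m-1}]^j_i\,dA + O(\epsilon)\|\nabla u\|^2_{L^2}$; and $\int u^2\sigma_m(D^2u)\,dA = O(\epsilon)\|\nabla u\|^2_{L^2}$ for $m\geq 1$. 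Finally, the identification $\int u^iu_j [T_m]^j_i\,dA = \int|\nabla u|^2\sigma_m(D^2u)\,dA + O(\epsilon)\|\nabla u\|^2_{L^2}$ follows from the pointwise expansion $[T_m]^j_i = \sum_{\ell=0}^m (-1)^\ell \sigma_{m-\ell}((D^2u)^\ell)^j_i$ and the identity $u^i u_j u^j_i = \tfrac{1}{2}u^i\nabla_i|\nabla u|^2$, with the $\ell\geq 1$ correction terms controlled via the Bochner formula on $\mathbb{S}^n$ (using $u\in C^3$ to handle expressions like $u^i\nabla_i\sigma_{m-1}$).

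The final step is to collect coefficients. The $m=0$ piece of the Taylor expansion directly contributes $\binom{n}{k}+\binom{n}{k}(n-k)u+\binom{n}{k}\tfrac{(n-k)(n-k-1)}{2}u^2$, matching the non-gradient portion of the target. Each coefficient of $\int|\nabla u|^2\sigma_p(D^2u)\,dA$ receives four contributions: the direct $-\tfrac{k+1}{2}$ factor at index $m=p$; the $\tfrac{n+k-2p}{n-p}u^iu_j[T_p]^j_i$ piece converted to $|\nabla u|^2\sigma_p$; the IBP of $u\sigma_{p+1}$ from the $(n-m-k)u\sigma_m$ term at $m=p+1$; and the IBP of $\sigma_{p+2}$ from the constant term at $m=p+2$. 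Applying $\binom{n-p-1}{k-p-1} = \tfrac{k-p}{n-p}\binom{n-p}{k-p}$ and $\binom{n-p-2}{k-p-2} = \tfrac{(k-p)(k-p-1)}{(n-p)(n-p-1)}\binom{n-p}{k-p}$, these four pieces combine into $(-1)^p\binom{n-p}{k-p}\tfrac{(n-k)(k+1)}{2(p+1)(n-p)}$ for $p=0$, verifiable directly: $\binom{n}{k}\bigl[-\tfrac{k+1}{2}+\tfrac{n+k}{n}\bigr]+\binom{n-1}{k-1}(n-k-1)+\binom{n-2}{k-2}\tfrac{n-1}{2} = \binom{n}{k}\tfrac{(n-k)(k+1)}{2n}$. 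For $p\geq 1$, any discrepancy between the four-source calculation and the stated coefficient is of order $O(\epsilon)\|\nabla u\|^2_{L^2}$ and hence absorbed into the error term.

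The main obstacle is the algebraic bookkeeping in the coefficient collection, specifically verifying that the four contributions collapse to the stated coefficient via the binomial identities. A secondary technical difficulty is controlling the third-derivative terms that appear after iterated IBP (for example $u^i\nabla_i\Delta u$); showing they contribute only at $O(\epsilon)\|\nabla u\|^2_{L^2}$ requires the $C^3$ hypothesis together with the Bochner identity $\nabla u\cdot\nabla\Delta u = \tfrac{1}{2}\Delta|\nabla u|^2 - |D^2u|^2 - (n-1)|\nabla u|^2$ on $\mathbb{S}^n$ to trade third derivatives for lower-order quantities that then integrate to the desired smallness.
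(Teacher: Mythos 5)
Your plan tracks the paper's proof step by step: substitute Lemma~\ref{sigmak} into the area integral, Taylor-expand the prefactor via Lemma~\ref{taylor} (the paper states the combined expansion of $(1+u)^{n-m+1}/D^{k+1}$ exactly as you derived it), integrate by parts using $\nabla_j[T_m]^j_i(D^2u)=-(n-m)u_j[T_{m-1}]^j_i(D^2u)$, and collect coefficients, verifying the $m=0$ case explicitly while absorbing the $m\geq1$ discrepancies into $O(\epsilon)\|\nabla u\|_{L^2}^2$. One small correction: the paper's central integration-by-parts identity reads $\intsphere u^iu_j[T_m]^j_i(D^2u)\,\textit{dA}=\tfrac{m+2}{2}\intsphere|\nabla u|^2\sigma_m(D^2u)\,\textit{dA}+O(\epsilon)\|\nabla u\|_{L^2}^2$ (coefficient $\tfrac{m+2}{2}$, not $1$), obtained cleanly from a single integration by parts together with the algebraic identity $u^s_j[T_{m-1}]^j_i=\delta^s_i\sigma_m(D^2u)-[T_m]^s_i(D^2u)$ and with no Bochner formula needed; your stated coefficient is inexact for $m\geq1$, but since $\tfrac{m+2}{2}=1$ at $m=0$ and the $m\geq1$ contributions are already pointwise $O(\epsilon)|\nabla u|^2$, this does not affect your verified $p=0$ calculation or the final result.
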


\begin{proof}

Applying the Taylor expansion in Lemma \ref{taylor}, we find
       \begin{align}
    \int_{M} \sigma_k(h^i_j) d\mu 
    &=
    \sum_{m = 0}^{k} 
     (-1)^m{n - m \choose k -m}
     \intsphere 
 \bigg (1+ ( n -m -k)u + \frac{(n-m-k)(n-m-k -1)}{2}u^2
\nonumber 
\\
&-
\frac{k+1}{2}|\nabla u|^2
\bigg )
 \sigma_m(D^2u)
+
\frac{n+k-2m}{n-m} u^iu_j [T_m]^j_i(D^2u)
\textit{dA} 
+ O(\epsilon)||u||_{L^2}^2 +  O(\epsilon)||\nabla u||_{L^2}^2.
    \end{align}
 Next, recall from the preliminaries that   
    \begin{align}
        \nabla_j [T_m]^j_i(D^2u) = - (n-m)u_j [T_{m-1}]^j_i(D^2u),
    \end{align}
    and
    \begin{align}
        \sigma_m(D^2u) = \frac{1}{m}u_{j}^i[T_{m-1}]^j_i(D^2u). 
    \end{align}
Using these identities, we rewrite many of the terms using integration by parts.
First, for $m\geq 1$
 \begin{align}
     \intsphere |\nabla u|^2\sigma_{m}(D^2u)  \textit{dA} 
     &=
          \frac{1}{m}\intsphere |\nabla u|^2u_{j}^i[T_{m-1}]^j_i(D^2u) \textit{dA} 
         \nonumber 
         \\
           &=
          \frac{-1}{m}\intsphere u^i2u_su^s_{j}[T_{m-1}]^j_i(D^2u)+u^i|\nabla u|^2 \nabla_j [T_{m-1}]^j_i(D^2u) \textit{dA} 
        \nonumber   
          \\
           &=
          \frac{-2}{m}\intsphere u^iu_su^s_{j}[T_{m-1}]^j_i(D^2u)\textit{dA} 
          +  O(\epsilon)||\nabla u||^2_{L^2}
          \nonumber 
          \\
           &=
          \frac{2}{m}\intsphere u^iu_j[T_m]^j_i(D^2u) - |\nabla u|^2\sigma_{m}(D^2u) \textit{dA} 
          +  O(\epsilon)||\nabla u||^2_{L^2}.
 \end{align}
 Therefore,
 \begin{align}
 \label{int_by_parts_identity}
       \intsphere u^iu_j[T_m]^j_i(D^2u)  \textit{dA}
 &= \frac{m+2}{2}\intsphere |\nabla u|^2\sigma_{m}(D^2u) \textit{dA} 
          +  O(\epsilon)||\nabla u||^2_{L^2}.
 \end{align}
Next, we integrate each $\sigma_{m}(D^2u)$ term. For $m \geq 2$, we have
\begin{align}
    \intsphere \sigma_{m}(D^2u) \textit{dA}
    &= 
    \frac{1}{m} \intsphere u^i_{j}[T_{m-1}]^j_i(D^2u) \textit{dA}
  \nonumber  \\
      &= 
    \frac{-1}{m} \intsphere u^i \nabla_j[T_{m-1}]^j_i(D^2u) \textit{dA}
  \nonumber
  \\
      &= 
    \frac{n-m+1}{m} \intsphere u^{i}u_j
    [T_{m-2}]^j_i(D^2u) \textit{dA}.
    \end{align}
  By \eqref{int_by_parts_identity},
  \begin{align}
      \intsphere \sigma_{m}(D^2u) \textit{dA}   &= 
    \frac{n-m+1}{2}\intsphere  |\nabla u|^2\sigma_{m-2}(D^2u)\textit{dA} +  O(\epsilon)||\nabla u||^2_{L^2}.
\end{align} 
Also, for $m=0$ and $1$,
\begin{align}
     \intsphere \sigma_{0}(D^2u) \textit{dA}  &=  \intsphere 1 \textit{dA}
\text{  and  }
       \intsphere \sigma_{1}(D^2u) \textit{dA} =  0.
\end{align}
Similarly, for $m\geq 1$, 
\begin{align}
    \intsphere u\sigma_{m}(D^2u) \textit{dA}
    &= 
    \frac{1}{m} \intsphere uu^i_{j}[T_{m-1}]^j_i(D^2u) \textit{dA}
 \nonumber 
 \\
      &= 
    \frac{-1}{m} \intsphere u^iu_j[T_{m-1}]^j_i(D^2u)+uu^i \nabla_j[T_{m-1}]^j_i(D^2u) \textit{dA}
      \nonumber 
      \\
      &= 
    \frac{-1}{m} \intsphere u^iu_j[T_{m-1}]^j_i(D^2u) \textit{dA}
    + 
    O(\epsilon)||\nabla u||^2_{L^2}.
    \end{align}
    By \eqref{int_by_parts_identity},
    \begin{align}
 \intsphere u\sigma_{m}(D^2u) \textit{dA}     &= 
    \frac{-(m+1)}{2m} \intsphere  |\nabla u|^2\sigma_{m-1}(D^2u) \textit{dA}
    + 
    O(\epsilon)||\nabla u||^2_{L^2}.
\end{align}  
And,
\begin{align}
    \intsphere u\sigma_{0}(D^2u) \textit{dA}
    &= 
\intsphere u   \textit{dA}.
\end{align}  
  Lastly, for $m\geq 1$, 
  \begin{align}
    \intsphere u^2\sigma_{m}(D^2u) \textit{dA}
    &= 
    \frac{1}{m} \intsphere u^2u^i_{j}[T_{m-1}]^j_i(D^2u) \textit{dA}
    \nonumber 
    \\
      &= 
    \frac{-1}{m} \intsphere u^i
    \bigg (
    2uu_j[T_{m-1}]^j_i(D^2u)+u^2 \nabla_j[T_{m-1}]^j_i(D^2u)
    \bigg )
    \textit{dA}
    \nonumber 
    \\
    &=   O(\epsilon)||\nabla u||^2_{L^2}.
\end{align}   
And,
\begin{align}
    \intsphere u^2\sigma_{0}(D^2u) \textit{dA}
    = \intsphere u^2\textit{dA}.
\end{align}
All together, we find
\begin{align}
 \int_M \sigma_{k}(h^i_j) d\mu
    &=\intsphere 
  {n \choose k} +{n \choose k}  ( n -k)u+{n \choose k}\frac{(n - k)(n-k-1)}{2}u^2
 \nonumber 
 \\
&+\sum_{m = 0}^{k}  (-1)^{m}{n - m \choose k - m}\frac{(n-k)(k+1)}{2(m+1)(n-m)} |\nabla u|^2 \sigma_{m}(D^2u)
\textit{ dA} 
+  O(\epsilon)||\nabla u||^2_{L^2} +  O(\epsilon)||u||^2_{L^2}.
\end{align}
\end{proof}

Next we turn our attention to the $(k,\minus 1)$-isoperimetric deficit. Recall from Section \ref{deficitprelim},
\begin{align}
\delta_{k,\minus 1}(\Omega) = \frac{I_k(\Omega) - I_k(B_{\Omega,\minus 1})}{I_k(B_{\Omega,\minus 1})} 
\end{align}
where $B_{\Omega,\minus 1}$ is the ball centered at the origin satisfying $\text{Vol}(B_{\Omega,\minus 1}) = \text{Vol}(\Omega)$. In particular, if $\Omega$ is normalized so that $\text{Vol}(\Omega) = \text{Vol}(B)$, then
\begin{align}
\delta_{k,\minus1}(\Omega) = \frac{I_k(\Omega) - I_k(B)}{I_k(B)} .
\end{align}
In the next proposition, with the additional assumption that the barycenter of $\Omega$ is at the origin, we are able to bound $I_k(\Omega) - I_k(B)$ below by terms involving $||u||_{W^{1,2}}$. In order to get our main theorem bounding $\delta_{k,\minus1}(\Omega)$ below by $\alpha^2(\Omega)$ (see Section \ref{deficitprelim}), we only need the term with $||u||_{L^2}^2$ in the lower bound. However, we form a stronger statement that also includes  $||\nabla u||_{L^2}^2$ in the lower bound.
\begin{proposition}
\label{propvol}
 Suppose  $\Omega= \{ (1+u(\frac{x}{|x|}))x: x \in B  \} \subseteq \mathbb{R}^{n+1}$ where $u \in C^3(\partial B)$, $\text{Vol}(\Omega) = \text{Vol}(B)$, and $\text{bar}(\Omega) = 0$.
Additionally, assume for sufficiently small $\epsilon >0$ that $||u||_{W^{2,\infty}}< \epsilon$. Then,
\begin{align}
     I_k(\Omega) - I_k(B) \geq 
     {n\choose k}\frac{(n-k)(k+1)}{2n}
    \bigg (
     \bigg(1+O(\epsilon)
     \bigg)||u ||_{L^2}^2
     +
   \bigg (\frac{1}{2} + O(\epsilon)
    \bigg )
    ||\nabla u ||_{L^2}^2
     \bigg).
\end{align}
\end{proposition}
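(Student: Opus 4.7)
\textbf{Proof plan for Proposition \ref{propvol}.}
The plan is to start from Lemma \ref{expansion}, which already gives an explicit expansion of $I_k(\Omega)$ in terms of $\int u$, $\int u^2$, $\int |\nabla u|^2 \sigma_m(D^2u)$, and $O(\epsilon)(\|u\|_{L^2}^2+\|\nabla u\|_{L^2}^2)$ errors. Since $\sigma_m(D^2u) = O(\epsilon^m)$ for $m\geq 1$, only the $m=0$ term in the curvature sum contributes to the leading order, giving a clean coefficient $\binom{n}{k}\frac{(n-k)(k+1)}{2n}$ on $\int|\nabla u|^2$ and pushing all higher $m$ contributions into the error.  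Noting $I_k(B) = \binom{n}{k}\mathrm{Area}(\partial B)$, subtraction leaves a linear-in-$u$ term, a quadratic $u^2$ term, the $|\nabla u|^2$ term, and an error.

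Next I would use the normalization $\mathrm{Vol}(\Omega)=\mathrm{Vol}(B)$ to eliminate $\int u$ in favor of $\int u^2$. Expanding \eqref{volume_formula} gives $(n+1)\int u + \binom{n+1}{2}\int u^2 + O(\epsilon)\|u\|_{L^2}^2 = 0$, hence $\int u = -\tfrac{n}{2}\int u^2 + O(\epsilon)\|u\|_{L^2}^2$. Plugging this into the linear term, the combined coefficient of $\int u^2$ becomes
\[
\binom{n}{k}\frac{n-k}{2}\bigl[(n-k-1) - n\bigr] = -\binom{n}{k}\frac{(n-k)(k+1)}{2}.
\]
Factoring out $\binom{n}{k}\frac{(n-k)(k+1)}{2n}$, one obtains
\[
I_k(\Omega)-I_k(B) = \binom{n}{k}\frac{(n-k)(k+1)}{2n}\Bigl(-n\|u\|_{L^2}^2 + \|\nabla u\|_{L^2}^2\Bigr) + O(\epsilon)(\|u\|_{L^2}^2+\|\nabla u\|_{L^2}^2).
\]

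The main work is then the spherical-harmonic (Fuglede-type) estimate
\[
-n\|u\|_{L^2}^2 + \|\nabla u\|_{L^2}^2 \geq (1-O(\epsilon))\|u\|_{L^2}^2 + \tfrac{1}{2}\|\nabla u\|_{L^2}^2,
\]
which I would prove as follows. Decompose $u = u_0 + u_1 + u_{\geq 2}$ into spherical harmonics of degrees $0$, $1$, and $\geq 2$ on $\partial B$; the eigenvalues of $-\Delta_{\mathbb{S}^n}$ are $0$, $n$, and $\geq 2(n+1)$ respectively. The volume constraint together with the smallness of $u$ forces $\int u = O(\|u\|_{L^2}^2)$, so $\|u_0\|_{L^2}^2 = O(\|u\|_{L^2}^4) = O(\epsilon)\|u\|_{L^2}^2$. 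The barycenter condition, via \eqref{bar_formula}, gives $\int u\, x_i = O(\|u\|_{L^2}^2)$ for each coordinate $x_i$, so similarly $\|u_1\|_{L^2}^2 = O(\epsilon)\|u\|_{L^2}^2$. After an orthogonal cancellation one finds
\[
-n\|u\|_{L^2}^2 + \|\nabla u\|_{L^2}^2 = -n\|u_0\|_{L^2}^2 + \bigl(\|\nabla u_{\geq 2}\|_{L^2}^2 - n\|u_{\geq 2}\|_{L^2}^2\bigr),
\]
and splitting $\|\nabla u_{\geq 2}\|^2 = \tfrac{1}{2}\|\nabla u_{\geq 2}\|^2 + \tfrac{1}{2}\|\nabla u_{\geq 2}\|^2 \geq \tfrac{1}{2}\|\nabla u_{\geq 2}\|^2 + (n+1)\|u_{\geq 2}\|^2$ yields the desired two-term bound, with the small $\|u_0\|^2$ and $\|u_1\|^2$ reabsorbed into the $O(\epsilon)$ error.

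The main obstacle is the harmonic-decomposition step: one needs to extract precisely the coefficient $1$ on $\|u\|_{L^2}^2$ and $\tfrac{1}{2}$ on $\|\nabla u\|_{L^2}^2$ from the spectral inequality $\|\nabla u_{\geq 2}\|^2 \geq 2(n+1)\|u_{\geq 2}\|^2$, while simultaneously controlling the deviations of $u_0$ and $u_1$ from $0$ quadratically in $\|u\|_{L^2}$ using the volume and barycenter constraints. Combining this inequality with the earlier algebraic simplification completes the proof.
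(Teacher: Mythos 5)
Your proposal is correct and follows essentially the same route as the paper: expand via Lemma \ref{expansion}, push the $m\geq 1$ curvature terms into the $O(\epsilon)$ error, trade $\int u$ for $-\frac{n}{2}\int u^2$ using the volume constraint, and then invoke the Fuglede/Cicalese--Leonardi spectral-gap estimate $\|\nabla u\|_{L^2}^2 \geq 2(n+1)\|u\|_{L^2}^2 + O(\epsilon)\|u\|_{L^2}^2$ with the half-and-half splitting of $\|\nabla u\|^2$. The one cosmetic difference is that the paper simply cites this inequality as Lemma 4.2 of \cite{MR2980529} at this point (and only proves a variant later in Lemma \ref{spherharmbound}), whereas you reprove it inline via the $u_0 + u_1 + u_{\geq 2}$ decomposition; your sketch of that argument is correct, and the orthogonal cancellation of the $u_1$ contribution in $-n\|u\|^2 + \|\nabla u\|^2$ is exactly the mechanism the cited lemma exploits.
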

\begin{proof}

From Lemma \ref{expansion}, 
\begin{align}
\label{ik_minus_ib}
     I_k(\Omega) - I_k(B)
     &=
       \intsphere 
  {n \choose k}  ( n -k)u+{n \choose k}\frac{(n - k)(n-k-1)}{2}u^2
\nonumber
\\
&+\sum_{m = 0}^{k}  (-1)^{m}{n - m \choose k - m}\frac{(n-k)(k+1)}{2(m+1)(n-m)} |\nabla u|^2 \sigma_{m}(D^2u)
\textit{ dA} 
+  O(\epsilon)||\nabla u||^2_{L^2} +  O(\epsilon)||u||^2_{L^2}.
\end{align}
Using the assumption that $\text{Vol}(\Omega) = \text{Vol}(B)$, we have from formula \eqref{volume_formula} for the volume that
\begin{align}
\intsphere  u \textit{ dA} = \intsphere \frac{-n}{2}u^2 \textit{ dA} +  O(\epsilon)||u||^2_{L^2}. 
\end{align}
Substituting this expression into \eqref{ik_minus_ib} yields
\begin{align}
    I_k(\Omega) - I_k(B)
&=\intsphere 
{n\choose k}\frac{(n-k)(k+1)}{2n} |\nabla u|^2
-
{n \choose k}\frac{(n - k)(k+1)}{2}u^2
\nonumber
\\
&+\sum_{m = 1}^{k}  (-1)^{m}{n - m \choose k - m}\frac{(n-k)(k+1)}{2(m+1)(n-m)} |\nabla u|^2 \sigma_{m}(D^2u)
\textit{dA} 
+  O(\epsilon)||\nabla u||^2_{L^2} +  O(\epsilon)||u||^2_{L^2}.
\label{Ikbeforespherical}
 \end{align}
 Then, using the assumptions that $\text{Vol}(\Omega) = \text{Vol}(B)$ and $\text{bar}(\Omega) = 0$, Cicalese and Leonardi showed  (see Lemma 4.2 in \cite{MR2980529}), by writing $u$ in terms of its spherical harmonics basis, that
 \begin{align}
 \label{inequality_spherical_volume}
      ||\nabla u||_{L^2}^2 \geq 2(n+1) ||u ||_{L^2}^2 + O(\epsilon)||u||^2_{L^2}.
      \end{align}
Finally, by applying the inequality \eqref{inequality_spherical_volume} to \eqref{Ikbeforespherical}, we find that $I_k(\Omega) - I_k(B)$ is bounded below by the following expression:
 \begin{align}
 & {n\choose k}\frac{(n-k)(k+1)}{2n}
\bigg(
\frac{1}{2}||\nabla u||^2_{L^2}
+
(n+1)||u ||_{L^2}^2 
-n||u ||_{L^2}^2 
\bigg)
+  O(\epsilon)||\nabla u||^2_{L^2} +  O(\epsilon)||u||^2_{L^2}.
 \end{align}
\end{proof}

Now, with the lower bound on $I_k(\Omega) - I_k(B)$ being controlled by $||u||^2_{L^2}$, we are equipped to show one of our main results.
Observe, as shown in \cite{MR2980529}, that for $||u||_{L^{\infty}} < \epsilon$,  H\"{o}lder's inequality yields
\begin{align}
    \frac{|\Omega \Delta B|}{|B|} &= 
    \frac{1}{|B|}
    \bigg (||u||_{L^1} + \sum_{k=2}^{n+1} \int_{\sphere} \frac{1}{n+1}{n +1 \choose k}|u|^k \textit{ dA} \bigg ) 
    \nonumber
    \\
   & \leq \frac{1}{|B|}\bigg(\text{Area}(\partial B)^{1/2}||u||_{L^2} 
    + \sum_{k=2}^{n+1} \int_{\sphere} \frac{1}{n+1}{n +1 \choose k}|u|^k \textit{ dA} \bigg  ). 
\end{align}
Therefore, when $\text{Vol}(\Omega) = \text{Vol}(B)$,
\begin{align}
\label{holder_frankael}
    \alpha^2(\Omega)
    \leq  \frac{|\Omega \Delta B|^2}{|B|^2}
    \leq
    \frac{\text{Area}(\partial B)}{|B|^2}||u||^2_{L^2} 
    + O(\epsilon)||u||_{L^2}^2
    =
    \frac{(n+1)^2}{\text{Area}(\partial B)}||u||^2_{L^2} 
    + O(\epsilon)||u||_{L^2}^2.
\end{align}
Now we are ready to prove Theorem \ref{firstthm}.

\vspace{.07in}
\noindent \textbf{Theorem \ref{firstthm}.}
\textit{ Suppose  $\Omega= \{ (1+u(\frac{x}{|x|}))x: x \in B  \} \subseteq \mathbb{R}^{n+1}$, where $u \in C^3(\partial B)$, $\text{Vol}(\Omega) = \text{Vol}(B)$, and $\text{bar}(\Omega) =0$.}
 \textit{For all $\eta >0$, there exists $\epsilon >0$ such that if $||u||_{W^{2,\infty}}< \epsilon$, then}
\begin{align}
    \delta_{k,j}(\Omega)
    \geq 
\bigg (
\frac{(n-k)(k+1)}{2n(n+1)^2}  -\eta \bigg)\alpha^2(\Omega).
\end{align}

\begin{proof}
Suppose $||u||_{W^{2,\infty}}< \epsilon$. Since $\text{Vol}(\Omega) = \text{Vol}(B)$, the definition for the $(k,\minus 1)$-isoperimetric deficit becomes
\begin{align} \delta_{k,\minus 1}(\Omega) = \frac{I_k(\Omega) - I_k(B)}{I_k(B)} =\frac{I_k(\Omega) - I_k(B)}{{n \choose k}\text{Area}(\partial B)} .
  \end{align} 
From Proposition \ref{propvol},  we have
\begin{align} \delta_{k,\minus 1}(\Omega) \geq \frac{(n-k)(k+1)}{2n\text{Area}(\partial B)}||u ||_{L^2}^2 + O(\epsilon) ||u ||_{L^2}^2. 
\end{align}
Next, as noted in \eqref{holder_frankael},
  \begin{align}
     \alpha^2(\Omega) \leq \frac{|\Omega \Delta B |^2 }{|B|^2}  \leq \frac{(n+1)^2}{
     \text{Area}(\partial B)} ||u||_{L^2}^2 + O(\epsilon)||u||_{L^2}^2.  
       \end{align} 
It follows that
       \begin{align}
     ||u||_{L^2}^2 
      &\geq 
       \frac{\text{Area}(\partial B)}{(n+1)^2} \alpha^2(\Omega)+ O(\epsilon)\alpha^2(\Omega).
       \label{u2normbound}
         \end{align} 
Therefore,
 \begin{align}\delta_{k,\minus 1}(\Omega) 
 &\geq 
 \frac{(n-k)(k+1)}{2n\text{Area}(\partial B)}
 (1 + O(\epsilon)) ||u ||_{L^2}^2
 \nonumber
 \\
 &\geq 
 \frac{(n-k)(k+1)}{2n(n+1)^2}\alpha^2(\Omega) 
   + O(\epsilon)\alpha^2(\Omega). 
   \end{align}

\end{proof}

\section{Quantitative isoperimetric inequality for $\delta_{k,j}(\Omega)$ when $j\geq 0$}
In this section we extend the result from Theorem \ref{firstthm} to $\delta_{k,j}(\Omega)$ for $0 \leq j < k$. The proof turns out to be quite similar to the case for $\delta_{k,\minus 1}(\Omega)$ in the previous section. The expression for $I_k(\Omega)$ will contain the quantity $C\intsphere |\nabla u|^2 - nu^2 \textit{dA}$ under the assumption $I_j(\Omega) = I_j(B)$, which we bound in the same manner as in Proposition \ref{propvol}.

We begin with the following proposition.

\begin{proposition}
\label{propgen}
 Fix  $j$ where $ 0 \leq j < k$. Suppose $\Omega = \{ (1+u(\frac{x}{|x|}))x: x \in B \} \subseteq \mathbb{R}^{n+1}$, where $u \in C^3(\partial B)$, $I_j(\Omega) = I_j(B)$, and $\text{bar}(\Omega) = 0$.
Assume for sufficiently small $\epsilon >0$ that $||u||_{W^{2,\infty}}<\epsilon$. Then,
\begin{align} I_k(\Omega) - I_k(B) \geq
 {n  \choose k}\frac{(n-k)(k-j)}{2n}
 \bigg( 
 \bigg(1+ O(\epsilon) \bigg) || u||_{L^2}^2 
 + \bigg(\frac{1}{2} +O(\epsilon)
 \bigg )||\nabla u||_{L^2}^2 
 \bigg ).
   \end{align} 
\end{proposition}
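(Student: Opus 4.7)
The strategy closely parallels that of Proposition \ref{propvol}, with the normalization $\text{Vol}(\Omega) = \text{Vol}(B)$ replaced by $I_j(\Omega) = I_j(B)$. First I would apply Lemma \ref{expansion} to both $I_k(\Omega) - I_k(B)$ and $I_j(\Omega) - I_j(B)$. Because $|\sigma_m(D^2u)| = O(\epsilon^m)$ for $m \geq 1$, the summation terms beyond $m = 0$ in Lemma \ref{expansion} are absorbed into $O(\epsilon)||\nabla u||_{L^2}^2$, so for each $l \in \{j,k\}$ I obtain
\begin{align*}
I_l(\Omega) - I_l(B) = {n \choose l}(n-l) \intsphere u \textit{ dA} + {n \choose l}\frac{(n-l)(n-l-1)}{2}||u||_{L^2}^2 + {n \choose l}\frac{(n-l)(l+1)}{2n}||\nabla u||_{L^2}^2 + E,
\end{align*}
where $E = O(\epsilon)(||u||_{L^2}^2 + ||\nabla u||_{L^2}^2)$.

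Setting $l = j$ and using the hypothesis $I_j(\Omega) = I_j(B)$, I would solve for $\intsphere u \textit{ dA}$ and substitute into the $l = k$ identity. A direct coefficient computation shows that the $||u||_{L^2}^2$ coefficient becomes $-{n \choose k}(n-k)(k-j)/2$ and the $||\nabla u||_{L^2}^2$ coefficient becomes ${n \choose k}(n-k)(k-j)/(2n)$, yielding the key algebraic identity
\begin{align*}
I_k(\Omega) - I_k(B) = {n \choose k}\frac{(n-k)(k-j)}{2n}\bigl[\,||\nabla u||_{L^2}^2 - n\,||u||_{L^2}^2\,\bigr] + O(\epsilon)\bigl(||u||_{L^2}^2 + ||\nabla u||_{L^2}^2\bigr).
\end{align*}

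To finish, I would establish the spectral inequality $||\nabla u||_{L^2}^2 \geq 2(n+1)||u||_{L^2}^2 + O(\epsilon)||u||_{L^2}^2$ by spherical-harmonics decomposition, following \cite{MR2980529} (cf.\ \eqref{inequality_spherical_volume}). The condition $\text{bar}(\Omega) = 0$ forces the degree-one Fourier coefficients of $u$ to be $O(\epsilon)||u||_{L^2}$ through \eqref{bar_formula}; the degree-zero coefficient is controlled by the derived identity for $\intsphere u \textit{ dA}$ together with $||u||_{W^{2,\infty}} < \epsilon$. Splitting $||\nabla u||_{L^2}^2 - n||u||_{L^2}^2 = \tfrac{1}{2}||\nabla u||_{L^2}^2 + (\tfrac{1}{2}||\nabla u||_{L^2}^2 - n||u||_{L^2}^2) \geq \tfrac{1}{2}||\nabla u||_{L^2}^2 + ||u||_{L^2}^2 - O(\epsilon)||u||_{L^2}^2$ and inserting into the key identity yields the claim.

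The main obstacle is verifying that the $I_j$ constraint really forces the degree-zero Fourier coefficient of $u$ to be $O(\epsilon)||u||_{L^2}$, not merely bounded; this requires showing that the right-hand side of the derived formula for $\intsphere u \textit{ dA}$ is of size $O(\epsilon \cdot ||u||_{L^2})$, which follows from $||u||_{W^{2,\infty}} < \epsilon$ since both $||u||_{L^2}^2$ and $||\nabla u||_{L^2}^2$ are then $\leq C\epsilon\,||u||_{L^2}$ up to factors of $\text{Area}(\partial B)$. Once this smallness is in hand, the Poincaré inequality on $S^n$ with next eigenvalue $2(n+1)$ supplies the required spectral gap and the proof is complete.
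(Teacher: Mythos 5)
Your proposal follows the same route as the paper: apply Lemma \ref{expansion} to both $I_k$ and $I_j$, solve the $j$-level identity for $\intsphere u\,dA$, substitute into the $k$-level expansion to get the key identity with coefficients $-{n\choose k}\frac{(n-k)(k-j)}{2}$ on $\|u\|_{L^2}^2$ and ${n\choose k}\frac{(n-k)(k-j)}{2n}$ on $\|\nabla u\|_{L^2}^2$, and then invoke a spherical-harmonics spectral inequality (the paper's Lemma \ref{spherharmbound}). The coefficient computation, the absorption of the $m\geq 1$ terms into $O(\epsilon)\|\nabla u\|_{L^2}^2$, and the final split $\|\nabla u\|^2 - n\|u\|^2 = \tfrac12\|\nabla u\|^2 + (\tfrac12\|\nabla u\|^2 - n\|u\|^2)$ all match.

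There is one small technical flaw in your discussion of the ``main obstacle.'' You claim that $\|\nabla u\|_{L^2}^2 \leq C\epsilon\,\|u\|_{L^2}$, and use this to conclude $a_0 = O(\epsilon)\|u\|_{L^2}$. But $\|\nabla u\|_{L^2}^2$ cannot in general be bounded by $\|u\|_{L^2}$: e.g.\ a single high-frequency spherical harmonic with small amplitude has $\|u\|_{L^2}$ tiny while $\|\nabla u\|_{L^2}^2/\|u\|_{L^2}$ is huge. What you actually need (and what the paper establishes) is weaker: $a_0^2 = O(\epsilon^2)\|u\|_{L^2}^2 + O(\epsilon^2)\|\nabla u\|_{L^2}^2$. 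This follows because $a_0 = O(1)(\|u\|_{L^2}^2 + \|\nabla u\|_{L^2}^2)$ from the derived identity, and each of $\|u\|_{L^2}^2$, $\|\nabla u\|_{L^2}^2$ is $O(\epsilon^2)$ by the $W^{2,\infty}$ bound, so squaring gives the claim. Consequently the spectral inequality comes with an extra $O(\epsilon^2)\|\nabla u\|_{L^2}^2$ error (not just $O(\epsilon)\|u\|_{L^2}^2$ as you wrote), but this is harmless: it is absorbed into the $\bigl(\tfrac12 + O(\epsilon)\bigr)\|\nabla u\|_{L^2}^2$ cushion retained in the final split, which is exactly why the split into $\tfrac12\|\nabla u\|^2 + \cdots$ is performed in the first place.
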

\begin{proof}

First,
for any $s \geq 0$, we have from Lemma \ref{expansion} that
 \begin{align}
 \label{ik_sub_later}
    I_s(\Omega) - I_s(B) 
    &=
    \intsphere 
  {n \choose s}  ( n -s)u+{n \choose s}\frac{(n - s)(n-s-1)}{2}u^2
\nonumber
\\
&+\sum_{m = 0}^{s}  (-1)^{m}{n - m \choose s - m}\frac{(n-s)(s+1)}{2(m+1)(n-m)} |\nabla u|^2 \sigma_{m}(D^2u)
\textit{dA} 
+  O(\epsilon)||\nabla u||^2_{L^2} +  O(\epsilon)||u||^2_{L^2}.
 \end{align}
 Therefore, if $I_j(\Omega) = I_j(B)$, 
 \begin{align}
 \label{ij_equals_ball}
    \intsphere
    u
    \textit{dA} 
    &=
    -\intsphere 
 \frac{n-j-1}{2}u^2
 +
 \frac{j+1}{2n} |\nabla u|^2
+
\bigg(
\frac{1}{{n \choose j}}
\sum_{m = 1}^{k} 
(-1)^m{n -m \choose j-m}
\frac{j+1}{2(m+1)(n-m)} \sigma_{m}(D^2u) \bigg) 
\textit{dA} 
\nonumber
\\
&+  O(\epsilon)||\nabla u||^2_{L^2} +  O(\epsilon)||u||^2_{L^2}.
 \end{align}
Substituting this expression in \eqref{ik_sub_later} for $s=k$ yields
 \begin{align}
    I_k(\Omega) - I_k(B) 
    &=
  {n  \choose k}\frac{(n-k)(k-j)}{2n}  \intsphere 
 |\nabla u|^2 - nu^2
+
\bigg(
\sum_{m = 1}^{k} 
d_m\sigma_{m}(D^2u) \bigg) |\nabla u|^2 
\textit{dA}
\nonumber
\\
&+  O(\epsilon)||\nabla u||^2_{L^2} +  O(\epsilon)||u||^2_{L^2},
 \end{align}
 where each $d_m$ is coefficient for $\sigma_m(D^2u)|\nabla u|^2$.
 Using the assumptions that $I_j(\Omega) = I_j(B)$ and $\text{bar}(\Omega) =0$, we show in the following lemma that
\begin{align}
    ||\nabla u||_{L^2}^2 \geq 2(n+1) ||u ||_{L^2}^2 + O(\epsilon^2)||u||_{L^2}^2 +O(\epsilon^2)||\nabla u||^2_{L^2}.
    \end{align} 
Therefore,
\begin{align}
    I_k(\Omega) - I_k(B)
&\geq
{n  \choose k}\frac{(n-k)(k-j)}{2n}
\bigg(
\frac{1}{2}||\nabla u||^2_{L^2} + (n+1)||u||_{L^2}^2 -n ||u||_{L^2}^2
\bigg)
\nonumber
\\
& +\intsphere 
\bigg(
\sum_{m = 1}^{k} 
d_m \sigma_{m}(D^2u) \bigg) |\nabla u|^2 
\textit{dA} 
+  O(\epsilon) ||\nabla u||^2_{L^2} +  O(\epsilon)||u||^2_{L^2}
\nonumber
\\
&\geq
{n  \choose k}\frac{(n-k)(k-j)}{4n}||\nabla u||^2_{L^2} + {n  \choose k}\frac{(n-k)(k-j)}{2n}||u||_{L^2}^2
+  O(\epsilon)||\nabla u||^2_{L^2} +  O(\epsilon)||u||^2_{L^2}.
 \end{align}

\end{proof}

We now prove the lower bound on $||\nabla u||_{L^2}^2$ used in the previous lemma. The proof closely resembles that in \cite{MR2980529} by Cicalese and Leonardi in their work with the classical quantitative isoperimetric inequality (see also \cite{MR859955} and \cite{ MR942426} by Fuglede). 

\begin{lemma}
\label{spherharmbound}
Suppose  $\Omega = \{ (1+u(\frac{x}{|x|}))x: x \in B \} \subseteq \mathbb{R}^{n+1}$, with $u \in C^3(\partial B)$, $\text{bar}(\Omega) = 0$, and $I_j(\Omega) = I_j(B)$ for a fixed $j$ where $0 \leq j \leq n$. It holds that
\begin{align}
 || \nabla u||^2_{L^2}
    \geq
    2(n+1)||u||_{L^2}^2 + O(\epsilon^2) ||u||_{L^2}^2+ O(\epsilon^2)||\nabla u||_{L^2}^2.
\end{align}
\end{lemma}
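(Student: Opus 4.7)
The plan is to decompose $u$ in spherical harmonics and exploit the two constraints to control the two low-frequency components that obstruct the Poincaré-type inequality $\|\nabla u\|_{L^2}^2\geq 2(n+1)\|u\|_{L^2}^2$ on $\partial B$. Writing $u=\sum_{k\geq 0} u_k$ with $u_k$ in the $k$-th eigenspace of $-\Delta_{\mathbb{S}^n}$ (eigenvalue $\lambda_k=k(k+n-1)$), the facts $\lambda_0=0$, $\lambda_1=n$, and $\lambda_k\geq \lambda_2=2(n+1)$ for $k\geq 2$, combined with Parseval, give
\begin{align*}
\|\nabla u\|_{L^2}^2-2(n+1)\|u\|_{L^2}^2
=\sum_{k\geq 0}(\lambda_k-2(n+1))\|u_k\|_{L^2}^2
\geq -2(n+1)\|u_0\|_{L^2}^2-(n+2)\|u_1\|_{L^2}^2.
\end{align*}
So the lemma reduces to showing $\|u_0\|_{L^2}^2+\|u_1\|_{L^2}^2\leq O(\epsilon^2)\bigl(\|u\|_{L^2}^2+\|\nabla u\|_{L^2}^2\bigr)$. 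The smallness hypothesis $\|u\|_{W^{2,\infty}}<\epsilon$ is used repeatedly via $\|u\|_{L^2},\|\nabla u\|_{L^2}=O(\epsilon)$.

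The bound on $\|u_1\|_{L^2}$ follows from $\text{bar}(\Omega)=0$. The first spherical harmonics are linear combinations of the coordinate functions $x_i$, and $\int_{\partial B} x_i\,dA=0$. Expanding $(1+u)^{n+2}=1+(n+2)u+O(\epsilon)u$ in formula \eqref{bar_formula} and pairing with each $x_i$ yields $(n+2)\int_{\partial B} u\,x_i\,dA=O(\epsilon)\|u\|_{L^2}^2$. Squaring and summing over $i$ gives $\|u_1\|_{L^2}^2=O(\epsilon^2)\|u\|_{L^2}^4=O(\epsilon^4)\|u\|_{L^2}^2$, better than required.

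The main step is the $\|u_0\|_{L^2}$ estimate, which is where the constraint $I_j(\Omega)=I_j(B)$ is used. I would apply Lemma \ref{expansion} with $k$ replaced by $j$ and solve for $\int u\,dA$ to obtain identity \eqref{ij_equals_ball}, which expresses $\int u\,dA$ as an $O(1)$-multiple of $\|u\|_{L^2}^2$, an $O(1)$-multiple of $\|\nabla u\|_{L^2}^2$, a sum of terms $\int\sigma_m(D^2u)\,dA$ for $1\leq m\leq j$, and a genuine $O(\epsilon)\|u\|_{W^{1,2}}^2$ remainder. The $m=1$ term is $\int\Delta u\,dA=0$, and for $m\geq 2$ one integration by parts on the sphere (as carried out in the proof of Lemma \ref{expansion}) reduces $\int\sigma_m(D^2u)\,dA$ to $\frac{n-m+1}{2}\int|\nabla u|^2\sigma_{m-2}(D^2u)\,dA+O(\epsilon)\|\nabla u\|_{L^2}^2$, which is therefore bounded by a constant times $\|\nabla u\|_{L^2}^2$. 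Collecting these contributions gives
\begin{align*}
\Bigl|\int_{\partial B} u\,dA\Bigr|\leq C\bigl(\|u\|_{L^2}^2+\|\nabla u\|_{L^2}^2\bigr).
\end{align*}
Since $u_0=\frac{1}{\text{Area}(\partial B)}\int_{\partial B}u\,dA$, we have $\|u_0\|_{L^2}^2=(\int u\,dA)^2/\text{Area}(\partial B)\leq C'\bigl(\|u\|_{L^2}^2+\|\nabla u\|_{L^2}^2\bigr)^2$, and using $\|u\|_{L^2}^2+\|\nabla u\|_{L^2}^2=O(\epsilon^2)$ on one copy of the factor gives $\|u_0\|_{L^2}^2\leq O(\epsilon^2)\bigl(\|u\|_{L^2}^2+\|\nabla u\|_{L^2}^2\bigr)$. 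The main obstacle is precisely the bookkeeping on the higher-order $\sigma_m(D^2u)$ terms: they carry no explicit smallness factor, but the final squaring converts their $O(1)$ contribution to $\int u\,dA$ into an $O(\epsilon^2)$ contribution to $\|u_0\|_{L^2}^2$, which accounts for the quadratic dependence on $\epsilon$ in the statement and explains why the $W^{2,\infty}$ control (rather than merely $W^{1,\infty}$) is needed here.
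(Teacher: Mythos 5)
Your proof is correct and follows essentially the same strategy as the paper: a spherical harmonic decomposition with the Poincar\'e-type bound $\lambda_k\geq 2(n+1)$ for $k\geq 2$, using $I_j(\Omega)=I_j(B)$ (via the identity \eqref{ij_equals_ball}) to control the degree-$0$ mode and $\mathrm{bar}(\Omega)=0$ to control the degree-$1$ modes. The only difference is presentational -- you decompose by full eigenspaces and spell out the squaring step that yields the $O(\epsilon^2)$ prefactor on $\|u_0\|_{L^2}^2$ more explicitly than the paper does.
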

\begin{proof}
We write
\begin{align}
    u = \sum_{k=0}^{\infty} a_kY_k,
\end{align}
where $\{Y_k \}$ are spherical harmonics which form an orthonormal basis for $L^2(\partial B)$. Since $Y_0 =1$ we have
\begin{align}
    a_0 = <u, 1>_{L^2(\partial B)} = \intsphere u \textit{ dA}.
\end{align}
Additionally, using the assumption $I_j(\Omega) = I_j(B)$, we have from  \eqref{ij_equals_ball} that
 \begin{align}
    \intsphere
    u
    \textit{dA} 
    &=
    -\intsphere 
 \frac{n-j-1}{2}u^2
 +
 \frac{j+1}{2n} |\nabla u|^2
+
\bigg(
\frac{1}{{n \choose j}}
\sum_{m = 1}^{k} 
(-1)^m{n -m \choose j-m}
\frac{j+1}{2(m+1)(n-m)} \sigma_{m}(D^2u) \bigg) 
\textit{dA} 
\nonumber
\\
&+  O(\epsilon)||\nabla u||^2_{L^2} +  O(\epsilon)||u||^2_{L^2}.
 \end{align}
This further implies that $\intsphere u
    \textit{dA} = O(\epsilon^2)$. Hence,
    \begin{align}
        a_0^2 
    &= O(\epsilon^2)||u||_{L^2}^2 +  O(\epsilon^2)||\nabla u||^2_{L^2}.
    \end{align}As shown in \cite{MR2980529}, combining  $\text{bar}(\Omega) = 0$ and $\intsphere Y_1 \textit{ dA}= 0$ gives
    \begin{align}
        \intsphere ((1+u)^{n+2} -1)Y_1 \textit{dA}= 0.
    \end{align}
So,
\begin{align}
    \intsphere u Y_1 \textit{dA} = \sum_{k = 2}^{n+2}{n+2 \choose k} \intsphere u^kY_1 \textit{ dA} = O(||u||_{L^2}^2).
\end{align}
Therefore, 
\begin{align}
    a_1^2 = O(\epsilon^2)||u||_{L^2}^2.
\end{align}

Next, we consider the corresponding eigenvalue of the spherical harmonic $Y_k$, which is explicitly given by $\lambda_k = -k(k+n - 1)$. Noting that  $|\lambda_k| \geq 2(n +1) $  when $k \geq 2$, we compute 
\begin{align}
    ||\nabla u||^2_{L^2} &= \sum_{k = 1}^{\infty} 
    |\lambda_k|a_k^2
    \nonumber
    \\
    &=\sum_{k = 2}^{\infty} 
    |\lambda_k|a_k^2 + na_1^2
    \nonumber
    \\
  &  \geq 
    2(n+1)\sum_{k = 2}^{\infty} 
    a_k^2 + na_1^2
       \nonumber
    \\
  & =
    2(n+1)\sum_{k = 0}^{\infty} 
    a_k^2  -2(n+1)a_0^2  -(n +2)a_1^2
    \nonumber
    \\
    &= 2(n+1)||u||_{L^2}^2 + O(\epsilon^2)||u||_{L^2}^2+
    O(\epsilon^2)||\nabla u||_{L^2}^2.
\end{align}
\end{proof}

Next, we aim to use Proposition \ref{propgen} to bound the $(k,j)$-isoperimetric deficit below by the Frankael asymmetry $\alpha(\Omega)$. When $j=-1$ (when the volume is preserved), estimating $\alpha(\Omega)$ was reduced to being bounded above by $|\Omega \Delta B|^2$, which was bounded by $||u||_{L^2}^2$ (up to a constant). When the $\text{Vol}(\Omega) \neq \text{Vol}(B)$, estimating this quanitity is a bit more difficult, and we show in the next theorem that we can bound it above by $||\nabla u||_{L^2}^2$.

\begin{lemma}
\label{asymmetry_upperbound_novolumepreserve}
Suppose $M:= \partial \Omega$ is a nearly spherical set, then
\begin{align}
    \frac{|\Omega \Delta B_{\Omega}|^2}{|B_{\Omega}|^2}
    &\leq
    \frac{(n+1)^2}{n^2\text{Area}(\partial B)}||\nabla u||_{L^2}^2 
    +O(\epsilon)||\nabla u||_{L^2}^2,
\end{align}
where $||u||_{W^{1,\infty}}< \epsilon$.
\end{lemma}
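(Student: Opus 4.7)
The plan is to write $|\Omega \Delta B_\Omega|$ as a single radial integral over $\partial B$, reduce the estimate to a Poincaré--Wirtinger inequality for a zero-mean function on $S^n$, and then propagate the $O(\epsilon)$ errors coming from the small-norm hypothesis. Letting $r$ denote the radius of $B_\Omega$, integrating $\rho^n\,d\rho$ on each radial line between the two profiles $\rho = 1+u(\theta)$ and $\rho = r$ gives
\begin{align*}
|\Omega \Delta B_\Omega| = \frac{1}{n+1}\intsphere \bigl|(1+u)^{n+1} - r^{n+1}\bigr|\,dA.
\end{align*}
The condition $|B_\Omega| = |\Omega| = \frac{1}{n+1}\intsphere (1+u)^{n+1}\,dA$ is exactly $r^{n+1} = \frac{1}{\text{Area}(\partial B)}\intsphere (1+u)^{n+1}\,dA$, so the function $v := (1+u)^{n+1} - r^{n+1}$ has zero mean on $\partial B$.

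Next I will apply Cauchy--Schwarz and the sharp Poincaré--Wirtinger inequality on $S^n$ (with constant $1/n$, coming from the first nontrivial eigenvalue $\lambda_1 = n$ of the Laplace--Beltrami operator on the round sphere) to estimate
\begin{align*}
\intsphere |v|\,dA \;\leq\; \text{Area}(\partial B)^{1/2}\,||v||_{L^2} \;\leq\; \frac{\text{Area}(\partial B)^{1/2}}{\sqrt{n}}\,||\nabla v||_{L^2}.
\end{align*}
Since $\nabla v = (n+1)(1+u)^n\nabla u$, the bound $||u||_{W^{1,\infty}}<\epsilon$ gives $||\nabla v||_{L^2}^2 = (n+1)^2(1+O(\epsilon))\,||\nabla u||_{L^2}^2$ pointwise through the chain rule. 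Substituting, squaring, and dividing by $|B_\Omega|^2 = \omega_{n+1}^2 r^{2(n+1)} = \text{Area}(\partial B)^2/(n+1)^2 \,(1+O(\epsilon))$ (which follows from $r^{n+1} = 1 + O(\epsilon)$) then yields a bound of the asserted shape.

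\textbf{Main obstacle.} The delicate point is ensuring that the $O(\epsilon)$ Taylor-expansion errors — from $(1+u)^{n+1}$, $(1+u)^n$, and $r^{n+1}$ — are absorbed into a multiplicative $(1+O(\epsilon))$ correction rather than an uncontrolled additive $||u||_{L^2}^2$ term. Because $u$ is not assumed to have zero mean, one cannot directly bound $||u||_{L^2}$ by $||\nabla u||_{L^2}$. The key observation is that the estimate is driven by the zero-mean function $v$ rather than $u$ itself, so every error term is of the form $(1+u)^a \nabla u$ and therefore factors cleanly through $||\nabla u||_{L^2}$, with the $a$-dependent prefactor producing only a $(1 + O(\epsilon))$ perturbation; the only remaining bookkeeping is to verify that the expansion $r = 1 + \bar u + O(\epsilon)\,\overline{|u|}$ does not reintroduce a stray $||u||_{L^2}^2$ contribution when one squares.
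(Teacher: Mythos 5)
Your plan matches the paper's almost exactly in overall structure — rewrite $|\Omega\Delta B_\Omega|$ as $\frac{1}{n+1}\int_{\partial B}|(1+u)^{n+1}-r^{n+1}|\,dA$, note that the signed quantity has zero mean, then invoke Cauchy--Schwarz and Poincar\'e--Wirtinger on $S^n$ — with one genuine (and welcome) simplification: you apply Poincar\'e directly to $v=(1+u)^{n+1}-r^{n+1}$, whereas the paper expands $v=\sum_{k=1}^{n+1}\binom{n+1}{k}\bigl(u^k-\text{Avg}(u^k)\bigr)$ and applies Poincar\'e to each $u^k$ separately, then recollects the binomial factors via $\nabla(u^k)=ku^{k-1}\nabla u$. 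Your version skips that bookkeeping entirely; the ``main obstacle'' you flag at the end (the expansion of $r$ reintroducing $\|u\|_{L^2}^2$) is indeed a non-issue once everything is phrased in terms of $v$, since you only need $r^{n+1}=1+O(\epsilon)$.

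There is, however, a constant discrepancy that keeps your argument from delivering the lemma as stated. Your displayed inequality $\|v\|_{L^2}\le \tfrac{1}{\sqrt n}\|\nabla v\|_{L^2}$ uses the sharp Poincar\'e constant on $S^n$ (your prose ``constant $1/n$'' is the squared-norm version; the display is the correct unsquared one). Chasing constants: $\tfrac{|\Omega\Delta B_\Omega|}{|B_\Omega|}\le\tfrac{\text{Area}(\partial B)^{1/2}}{(n+1)|B_\Omega|\sqrt n}\|\nabla v\|_{L^2}=\tfrac{\text{Area}(\partial B)^{1/2}}{\sqrt n\,|B_\Omega|}(1+O(\epsilon))\|\nabla u\|_{L^2}$, and squaring with $|B_\Omega|=\tfrac{\text{Area}(\partial B)}{n+1}(1+O(\epsilon))$ gives $\tfrac{(n+1)^2}{n\,\text{Area}(\partial B)}(1+O(\epsilon))\|\nabla u\|_{L^2}^2$, i.e.\ $n$ in the denominator rather than the lemma's $n^2$. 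The stated constant is smaller (a stronger claim) and your route does not produce it. The reason the paper gets $n^2$ is that its proof uses $\|u^k-\text{Avg}(u^k)\|_{L^2}\le\tfrac{1}{n}\|\nabla u^k\|_{L^2}$, which is not a valid Poincar\'e inequality on $S^n$ for $n\ge 2$ (a first spherical harmonic $Y_1$ gives equality at $\tfrac{1}{\sqrt n}$, exceeding $\tfrac{1}{n}$). Your constant thus appears to be the correct one, and the paper's stated constant in this lemma — and the downstream constant $\tfrac{n(n-k)(k-j)}{4(n+1)^2}$ in Theorem \ref{secondthm} — seems to be off by a factor of $n$.
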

\begin{remark}
Note that we do not need to assume $||D^2u||_{L^{\infty}} < \epsilon$ in this lemma.
\end{remark}

\begin{proof}
Recall the formula
\begin{align}
|\Omega| = \frac{1}{n+1} \intsphere (1+u)^{n+1} \textit{dA} = |B| + \sum_{k=1}
 \frac{{n+1 \choose k}}{n+1}\intsphere u^k \textit{dA} .
\end{align}

And, if $r$ is the radius of $B_{\Omega}$, then
$|\Omega| = |B_{\Omega}| = r^{n+1}|B|$. Hence,
$  r^{n+1} = \frac{|\Omega|}{|B|}$.
We compute, 
\begin{align}
    \frac{|\Omega \Delta B_{\Omega}|}{|B_{\Omega}|}
    &=
   \frac{1}{n+1} \frac{1}{|B_{\Omega}|}
    \intsphere \bigg|
    (1 + u)^{n+1} - r^{n+1}
    \bigg |\textit{dA}
    \nonumber
    \\
    &=
    \frac{1}{n+1} \frac{1}{|B_{\Omega}|}
    \intsphere \bigg|
    (1 + u)^{n+1} - \frac{|\Omega|}{|B|}
    \bigg|\textit{dA}
    \nonumber
    \\
      &=
    \frac{1}{n+1} \frac{1}{|B_{\Omega}|}
    \intsphere \bigg|
    (1 + u)^{n+1} - \frac{|B| + \sum_{k=1}
 \frac{{n+1 \choose k}}{n+1}\intsphere u^k \textit{dA} }{|B|}
 \bigg|\textit{dA}
 \nonumber\\
 &=
   \frac{1}{n+1} \frac{1}{|B_{\Omega}|}
    \intsphere \bigg|
      \sum_{k=1}^{n+1}{n +1 \choose k}u^k
    - \frac{1}{\text{Area}(\partial B)}   {n +1 \choose k}\intsphere u^k \textit{dA} 
   \bigg   |\textit{dA}
  \nonumber   \\
    &=
    \frac{1}{n+1} \frac{1}{|B_{\Omega}|}
    \intsphere  \bigg |
    \sum_{k=1}^{n+1}{n +1 \choose k}
    \bigg(
    u^k
    - 
    \text{Avg}(u^k)
    \bigg)
    \bigg |\textit{dA}
    \nonumber
   \\
       &\leq 
      \frac{1}{n+1} \frac{1}{|B_{\Omega}|}
     \sum_{k=1}^{n+1}{n +1 \choose k}
    ||u^k
    - 
    \text{Avg}(u^k)
    ||_{L^1},
\end{align}
where $\text{Avg}(u^k)$ denotes the average value of $u^k$ on $\partial B$. Then, by applying H\"{o}lder's inequality and the Poincar\'{e} inequality, we continue to bound
\begin{align}
   \frac{|\Omega \Delta B_{\Omega}|}{|B_{\Omega}|}
    &\leq 
      \frac{1}{n+1} \frac{\text{Area}(\partial B)^{1/2}}{|B_{\Omega}|}
     \sum_{k=1}^{n+1}{n +1 \choose k}
    ||u^k
    - 
    Avg(u^k)
    ||_{L^2}
 \nonumber    \\
    &\leq 
     \frac{1}{n(n+1)} \frac{\text{Area}(\partial B)^{1/2}}{|B_{\Omega}|}
     \sum_{k=1}^{n+1}{n +1 \choose k}
    ||\nabla (u^k) ||_{L^2}
   \nonumber  \\
    &\leq
     \frac{\text{Area}(\partial B)^{1/2}}{n|B_{\Omega}|}
     \sum_{k=1}^{n+1}{n \choose k-1}
     ||u^{k-1}||_{L^{\infty}}
    ||\nabla u ||_{L^2}.
\end{align}
Therefore, noting that $ ||u^{k-1}||_{L^{\infty}} = O(\epsilon)$ for $k \geq 2$,
\begin{align}
     \frac{|\Omega \Delta B_{\Omega}|^2}{|B_{\Omega}|^2}
     &\leq
    \frac{1}{n^2}  \frac{\text{Area}(\partial B)}{|B_{\Omega}|^2}
    ||\nabla u ||_{L^2}^2
    + O(\epsilon) ||\nabla u ||_{L^2}^2
   \nonumber  \\
    &= 
    \frac{(n+1)^2}{n^2\text{Area}(\partial B)}\frac{|B|^2}{| B_{\Omega}|^2} 
    ||\nabla u ||_{L^2}^2
    + O(\epsilon) ||\nabla u ||_{L^2}^2.
 \end{align}
 Then, because $\frac{|B|^2}{| B_{\Omega}|^2} = 1 + O(\epsilon)$
    \begin{align}
     \frac{|\Omega \Delta B_{\Omega}|^2}{|B_{\Omega}|^2}
    & \leq
    \frac{(n+1)^2}{n^2\text{Area}(\partial B)}
    ||\nabla u ||_{L^2}^2
    + O(\epsilon) ||\nabla u ||_{L^2}^2.
\end{align}
\end{proof}

We now prove the main theorem of this section, where we obtain a quantitative isoperimetric inequality for the $(k,j)$-isoperimetric deficit. Recall from Section \ref{deficitprelim} that normalizing $\Omega$ such that $I_j(\Omega) = I_j(B)$ yields
\begin{align}
    \delta_{k,j}(\Omega) = \frac{I_k(\Omega)  - I_k(B)}{I_k(B)}.
\end{align}

\vspace{.03in}
\noindent \textbf{Theorem \ref{secondthm}.}
\textit{Fix $0 \leq j < k$. Suppose $\Omega= \{ (1+u(\frac{x}{|x|}))x: x \in B  \}\subseteq \mathbb{R}^{n+1}$, where $u \in C^3(\partial B)$,  $I_j(\Omega) = I_j(B)$, and $\text{bar}(\Omega) =0$.}
 \textit{For all $\eta>0$, there exists $\epsilon>0$ such that if  $||u||_{W^{2,\infty}}< \epsilon$, then}
\begin{align}
    \delta_{k,j}(\Omega)
    \geq 
\bigg (
\frac{n(n-k)(k-j)}{4(n+1)^2}  -\eta \bigg)\alpha^2(\Omega).
\end{align}

\begin{proof}
Suppose $||u||_{W^{2,\infty}}< \epsilon$. 
Applying Lemma \ref{asymmetry_upperbound_novolumepreserve},

\begin{align}
     \alpha^2(\Omega)
      &\leq 
   \frac{|\Omega \Delta B|^2}{|B|^2}
     \leq 
     \bigg (\frac{(n+1)^2}{n^2\text{Area}(\partial B)} + O(\epsilon)
     \bigg )||\nabla u||^2.
\end{align}
Thus
  \begin{align}
      ||\nabla u||_{L^2}^2 \geq \bigg (\frac{n^2}{(n+1)^2}\text{Area}(\partial B)+ O(\epsilon) \bigg )\alpha^2(\Omega). 
        \end{align} 
 Additionally, since $I_j(\Omega) = I_j(B)$,
 \begin{align}
    \delta_{k,j}(\Omega) = \frac{ I_k(\Omega) - I_k(B)}{I_k(B)}= \frac{ I_k(\Omega) - I_k(B)}{{n \choose k}\text{Area}(\partial B)} .
      \end{align} 
      Therefore, applying Proposition \ref{propgen}, when $||u||_{W^{2,\infty}}< \epsilon$, we have that
\begin{align}
   \delta_{k,j}(\Omega)
    &
    \geq \bigg (
\frac{(n-k)(k-j)}{4n\text{Area}(\partial B)}  + O(\epsilon) \bigg)||\nabla u||_{L^2}^2
\nonumber
\\
&\geq
\bigg (
\frac{(n-k)(k-j)}{4n\text{Area}(\partial B)}  + O(\epsilon) \bigg)\bigg (\frac{n^2}{(n+1)^2}\text{Area}(\partial B) + O(\epsilon) \bigg )\alpha^2(\Omega) 
\nonumber
\\
&=
\bigg (
\frac{n(n-k)(k-j)}{4(n+1)^2}  + O(\epsilon) \bigg)\alpha^2(\Omega).
\end{align}

\end{proof}

\section{Bounds on $||u||_{L^{\infty}}$}
\label{haussdorffsection}
Following the argument of Fuglede in \cite{MR942426} for stability of  the classical isoperimetric inequality, we control $||u||_{L^{\infty}}$ using the $(k,\minus 1)$-isoperimetric deficit. Because we consider $\Omega$ when $\text{Vol}(\Omega) = \text{Vol}(B)$ and $\text{bar}(\Omega) = 0$, $||u||_{L^{\infty}}$ is simply the spherial deviation $d(\Omega)$ from Definition \ref{sphericaldeviation}.   First, we state a lemma from \cite{MR942426}. 
\begin{lemma}  
(Fuglede \cite{MR942426}, Lemma 1.4) Suppose $w: \partial B \rightarrow \mathbb{R}$ is a Lipschitz function where $\intsphere w \textit{dA} = 0$. Then
  \[ ||w||_{L^{\infty}}^n 
  \leq
  \begin{cases} 
          \pi || \nabla w||_{L^1} \leq \pi || \nabla w||_{L^2} & n = 1 \\
        4|| \nabla w||_{L^2}^2 \log \frac{8e|| \nabla w||_{L^{\infty}}^2}{|| \nabla w||_{L^2}^2} & n=2\\
          C|| \nabla w||_{L^2}^2|| \nabla w||_{L^{\infty}}^{n-2} & n \geq 3,
       \end{cases}
    \]
\label{zeromean}
where $C>0$ depends only on $n$.
\end{lemma}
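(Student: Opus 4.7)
The plan is to work in geodesic polar coordinates $(\rho,\omega) \in [0,\pi] \times S^{n-1}$ centered at a point $N \in \sphere$ where $|w|$ attains its $L^\infty$ norm, and exploit the zero-mean condition to find geodesics from $N$ along which $w$ must drop by a definite fraction of $M := \|w\|_{L^\infty}$. Writing WLOG $M = w(N) > 0$, the hypothesis $\intsphere w\, \textit{dA} = 0$ combined with $|w| \leq M$ forces $|\{w \leq M/2\}| \geq |\sphere|/3$, for otherwise $\intsphere w > (M/2)\cdot (2|\sphere|/3) + (-M)\cdot(|\sphere|/3) = 0$. Consequently, writing $\gamma_\omega(\rho) := \exp_N(\rho\omega)$ and using $\textit{dA} = \sin^{n-1}(\rho)\, d\rho\, d\omega$, the set $E := \{\omega \in S^{n-1} : \gamma_\omega([0,\pi]) \cap \{w \leq M/2\} \neq \emptyset\}$ satisfies $|E| \geq |S^{n-1}|/3$.

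The case $n=1$ is elementary: the sphere is a circle with only two geodesics from $N$, and on the shorter arc (length $\leq \pi$) from $N$ to a zero of $w$ (which exists by the intermediate value theorem) direct integration gives $M \leq \|w'\|_{L^1}$, while Cauchy--Schwarz on an interval of length $\leq \pi$ gives $M \leq \sqrt{\pi}\, \|w'\|_{L^2}$. Both inequalities in the first case of the statement follow (the constant $\pi$ there is generous).

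For $n \geq 2$, set $L := \|\nabla w\|_{L^\infty}$ and $r_0 := M/(4L)$. For each $\omega \in E$, let $\rho^*(\omega)$ be the first $\rho$ where $w(\gamma_\omega(\rho)) = M/2$; the Lipschitz bound $w(\gamma_\omega(r_0)) \geq M - r_0 L = 3M/4$ forces $\rho^*(\omega) > r_0$, so at least $M/4$ of the drop occurs on $[r_0,\rho^*(\omega)]$. Cauchy--Schwarz with the weight $\sin^{(n-1)/2}(s)$ then yields
\[
\frac{M^2}{16} \leq \left(\int_{r_0}^{\rho^*(\omega)} |\nabla w|^2(\gamma_\omega(s))\, \sin^{n-1}(s)\, ds\right) \cdot \left(\int_{r_0}^{\rho^*(\omega)} \sin^{-(n-1)}(s)\, ds\right).
\]
Integrating in $\omega \in E$ and applying Fubini, the first factor averages to $\|\nabla w\|_{L^2(\sphere)}^2$ up to the constant $|E|^{-1}$, while the second factor is independent of $\omega$ and evaluates (after possibly restricting to $\rho^*(\omega) \leq 3\pi/4$, which is allowed since the measure bound on $\{w \leq M/2\}$ prevents concentration near the antipode) to $\sim \log(1/r_0) = \log(4L/M)$ when $n=2$, and to $\sim r_0^{-(n-2)} = (4L/M)^{n-2}$ when $n \geq 3$. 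Substituting recovers the stated inequalities directly for $n \geq 3$, namely $M^n \lesssim \|\nabla w\|_{L^2}^2\, L^{n-2}$; for $n=2$ a brief manipulation using $M^2 \lesssim \|\nabla w\|_{L^2}^2$ (a trivial byproduct of the same estimate) places the quantity $L^2/\|\nabla w\|_{L^2}^2$ inside the logarithm, as in the claimed form.

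The main obstacle is the singular weight $\sin^{-(n-1)}(s)$ at $s=0$: a direct Cauchy--Schwarz on the full interval $[0,\rho^*(\omega)]$ diverges, reflecting the concentration of geodesics at $N$. The cutoff scale $r_0 \sim M/L$ is precisely the threshold below which the Lipschitz estimate controls the drop, and the trade-off between the near region (handled by $\|\nabla w\|_{L^\infty}$) and the far region (handled by $\|\nabla w\|_{L^2}$) is what produces the $\|\nabla w\|_{L^\infty}^{n-2}$ factor (or logarithmic factor for $n=2$) in the statement. A secondary technicality is showing that enough directions $\omega$ yield $\rho^*(\omega)$ bounded away from both $0$ and $\pi$; this follows from the measure lower bound on $\{w \leq M/2\}$ together with the fact that the spherical volume element $\sin^{n-1}(\rho)$ concentrates mass near $\rho = \pi/2$.
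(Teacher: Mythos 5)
The paper does not prove this lemma at all: it is quoted from Fuglede (\cite{MR942426}, Lemma 1.4) and used as a black box in Section \ref{haussdorffsection}, so there is no internal proof to compare against; your argument is a genuine from-scratch proof in the same one-dimensional-reduction spirit as Fuglede's. The core of it checks out. The zero-mean condition does force $|\{w\le M/2\}|\ge|\partial B|/3$; the polar-coordinate capacity estimate with the cutoff $r_0=M/(4\|\nabla w\|_{L^\infty})$, the weighted Cauchy--Schwarz, and the Fubini step are all sound; and the restriction to $\rho^*(\omega)\le 3\pi/4$ is indeed justified, since the geodesic cap of radius $\pi/4$ about the antipode has measure strictly less than $|\partial B|/3$, so a fixed fraction of $\{w\le M/2\}$ lies in $\{\rho\le 3\pi/4\}$ (and the corner case $r_0\ge 3\pi/4$ cannot occur, because then the Lipschitz bound would keep $w>M/2$ on all of $\{\rho\le 3\pi/4\}$, contradicting that measure bound).

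Two caveats. First, your parenthetical in the $n=2$ endgame is wrong as stated: $M^2\le C\|\nabla w\|_{L^2}^2$ is not a byproduct of your estimate and is false in general (there is no $W^{1,2}$--$L^\infty$ embedding in two dimensions). The step is rescued by a case split instead: if $M\le\|\nabla w\|_{L^2}$ the claimed inequality is immediate, because $\|\nabla w\|_{L^2}^2\le|\partial B|\,\|\nabla w\|_{L^\infty}^2$ makes the logarithm in the statement bounded below by a positive constant; if $M>\|\nabla w\|_{L^2}$, then $\log(CL/M)\le\log(CL/\|\nabla w\|_{L^2})=\tfrac12\log\bigl(C^2L^2/\|\nabla w\|_{L^2}^2\bigr)$, which is the claimed form. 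Second, your argument yields constants depending only on $n$, not the explicit constants $4$ and $8e$ of Fuglede's statement (for $n=1$ your constants are in fact sharper than the quoted ones). This is sufficient for every use the paper makes of the lemma, since Theorem \ref{infboundintro} only requires $A,C$ depending on $n$ and $k$, but strictly speaking it proves a slightly weaker statement than the one quoted.
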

This lemma is useful because the assumption that $\text{Vol}(\Omega) = \text{Vol}(B)$ is equivalently stated expressed as:
\begin{align}
    \intsphere (1+u)^{n+1} - 1 \textit{dA} =0.
\end{align}
So, we set $w = \frac{1}{n+1}((1+u)^{n+1} - 1 )$ and apply Lemma \ref{zeromean} to $w$. 
\vspace{.1in}

\noindent\textbf{Theorem \ref{infboundintro}.}
\textit{ Suppose  $\Omega = \{ (1+u(\frac{x}{|x|}))x: x \in B  \} \subseteq \mathbb{R}^{n+1}$, where $u \in C^3(\partial B)$, $\text{Vol}(\Omega) = \text{Vol}(B)$, and $\text{bar}(\Omega)=0$.}
 \textit{There exists an $\eta >0$ so if  $||u ||_{W^{2,\infty}}< \eta$, then}
 \[ ||u||_{L^{\infty}}^n 
  \leq
  \begin{cases} 
          C \delta_{k,- 1}^{1/2}(\Omega) & n = 1 \\
        C \delta_{k,-1}(\Omega)\log \frac{A}{\delta_{k,-1}(\Omega)} & n=2\\
          C\delta_{k,-1}(\Omega) & n \geq 3 
       \end{cases}
    \]
  \textit{ where $A,C>0$ depend only on $n,k$.}

\begin{proof}
As noted above, setting $w = \frac{1}{n+1}( (1+u)^{n+1} - 1)$ gives
\begin{align}
    \intsphere w \textit{dA} = 0.
\end{align}
Therefore, Lemma \ref{zeromean} applies to $w$. Moreover, as shown in \cite{MR942426}, there exists an $\eta>0$ such that when $||u||_{W^{2,\infty}}< \beta$, then
\begin{align}
    (1 - O(\eta))|u| \leq |w| \leq    (1 + O(\eta))|u|,
\end{align}
and
\begin{align}
    (1 - O(\eta))|\nabla u| \leq |\nabla w| \leq    (1 +O(\eta))|\nabla u|.
\end{align}
First we suppose $n\geq 3$. We will then prove the theorem for $n=2$, and $n=1$ follows similarly. Applying Lemma \ref{zeromean}, there is a constant $C>0$ (possibly changing from line to line) where
\begin{align}
 \label{boundfromzeromean} 
    || \nabla u ||_{L^2}^2
    &\geq  C|| \nabla w ||_{L^2}^2
  \geq  \frac{C||w||_{\infty}^{n}}{C(n) || \nabla w ||^{n-2}_{\infty}} 
   \geq  \frac{C||u||_{L^{\infty}}^{n}}{C(n) || \nabla u ||^{n-2}_{\infty}} 
    \geq C ||u||_{L^{\infty}}^{n}.
  \end{align}
 By Proposition \ref{propvol},  for small enough $||D^2u||_{L^{\infty}}$, 
\begin{align}
    \delta_{k,\minus 1}(\Omega) \geq C ||\nabla u||_{L^2}^2,
\end{align}
which together with \eqref{boundfromzeromean} gives the statement of the theorem for $n\geq 3$.

\noindent Next suppose $n=2.$ Then, there is a $M>0$ such that
\begin{align}
    || \nabla u ||_{L^2}^2 \log  \frac{M||\nabla u||_{L^{\infty}}^2}{||\nabla u||_{L^2}^2} 
    & \geq
    C|| \nabla w ||_{L^2}^2 \log  \frac{8e||\nabla w||_{L^{\infty}}^2}{||\nabla  w||_{L^2}^2} 
  \nonumber
  \\
    &
    \geq
    C||w||_{L^{\infty}}^n
    \nonumber
    \\
    &
    \geq C||u||_{L^{\infty}}^n.
    \label{firstn=2}
\end{align}
Furthermore,
\begin{align}
     || \nabla u ||_{L^2}^2 \log  \frac{M||\nabla u||_{L^{\infty}}^2}{||\nabla u||_{L^2}^2} 
     &\leq
     C_1\delta_{k,-1}(\Omega)\log \frac{C_2}{||\nabla u||_{L^2}^2} 
     \nonumber
     \\
     &\leq
      C_1\delta_{k,-1}(\Omega)\log  \frac{C_2}{\delta_{k, -1}(\Omega)}. 
      \label{secondn=2}
\end{align}
The last line follows from the observation in \eqref{Ikbeforespherical}, where for sufficiently small $\|u\|_{W^{2, \infty}}$ we have $\delta_{k,\minus1} \leq C(n,k) ||\nabla u||_{L^2}^2$ for some positive constant $C(n,k)>0$. Combining \eqref{firstn=2} and \eqref{secondn=2} concludes the statement of the theorem for $n=2$. The proof for $n=1$ follows similarly.
\end{proof}

\bibliography{main}

\end{document}